\documentclass[11pt]{amsart}




\usepackage{amssymb,verbatim,url}




\usepackage{graphicx,pdfsync}

\usepackage{amssymb}
 \usepackage{amsthm}


\usepackage{amsmath,verbatim}

\usepackage{color}


\newtheorem{theorem}{Theorem}[section]
\newtheorem{corollary}[theorem]{Corollary}

\newtheorem{proposition}[theorem]{Proposition}
\theoremstyle{definition}

\newcommand{\I}{\mathrm{i}}
\newcommand{\D}{\mathrm{d}}
\newcommand{\vp}{\varphi}
\newcommand{\A}{\alpha}
\newcommand{\B}{\beta}
\newcommand{\lb}{\left(}
\newcommand{\rb}{\right)}

\newcommand{\Rb}{\mathbb{R}}

\newcommand{\PD}{\partial}

\newcommand{\wt}{\widetilde}
\newcommand{\Cc}{\mathcal{C}}

\newcommand{\Nb}{\mathbb{N}}

\newcommand{\Beq}{\begin{equation}}
\newcommand{\Eeq}{\end{equation}}
\newcommand{\beq}{\begin{equation*}}
\newcommand{\eeq}{\end{equation*}}
\newcommand{\bal}{\begin{align}}
\newcommand{\eal}{\end{align}}
\renewcommand{\O}{\Omega}

\newcommand{\g}{\gamma}
\newcommand{\bpr}{\begin{proof}}
\newcommand{\epr}{\end{proof}}

\newcommand{\tred}[1]{{\color{red}{#1}}}

\newcommand{\bel}[1]{\begin{equation}\label{#1}}
\newcommand{\ee}{\end{equation}}

\newcommand{\NT}{\negthinspace}



\newcommand{\wh}{\widehat}

\newcommand{\ve}{\varepsilon}

\newcommand{\Bc}{\mathcal{B}}

\newcommand{\Nc}{\mathcal{N}}

\newcommand{\Qc}{\mathcal{Q}}

\newcommand{\Cb}{\mathbb{C}}

\newcommand{\Sb}{\mathbb{S}}

\renewcommand{\O}{\Omega}

\newcommand{\bp}{\begin{prob}}
\newcommand{\ep}{\end{prob}}

\newcommand{\ac}[1]{\begin{quotation}\textbf{Anupam's comment:\
}{\textit{#1}}\end{quotation}}






\begin{document}




\title[Stability estimates for a biharmonic boundary value problem]
{Stability estimates for the inverse boundary value problem for the biharmonic operator with bounded potentials}


\author[Choudhury and Krishnan]{Anupam Pal Choudhury and Venkateswaran P.~Krishnan}

\address{Tata Institute of Fundamental Research, Centre for Applicable Mathematics, Bangalore, India}
\email{anupam@math.tifrbng.res.in, vkrishnan@math.tifrbng.res.in}

\begin{abstract}
In this article, stability estimates are given for the determination of the  zeroth-order bounded perturbations of the biharmonic operator when the boundary Neumann measurements are made on the whole boundary and on slightly more than half the boundary, respectively. For the case of measurements on the whole boundary, the stability estimates are of $\ln$-type and for the case of measurements on slightly more than half of the boundary, we derive estimates that are of $\ln\ln$-type. 
\end{abstract}



\maketitle



\section{Introduction }
Let $\O \subset \Rb^{n}, n\geq 3$  be a bounded domain with $C^{\infty}$ boundary and consider the following equation:  
\[
\Bc_{q}u:= (\Delta^{2}+q) u=0 \mbox{ in } \O,  \quad q\in L^{\infty}(\O).
\]
 Let the domain of $\Bc_{q}$ be 
 \[
 D(\Bc_{q}):= \{u\in H^{4}(\O): u|_{\PD \O}=\Delta u|_{\PD \O}=0\}.
 \] 
 We consider the following space for the potential $q$: 
 \Beq\label{potential-space}
\Qc_{M}:=\{ q: \mbox{ supp}(q)\subset \overline{\O},\mbox{ and } \lVert q\rVert_{L^{\infty}(\O)}\leq M \mbox{ for some } M>0\}.
 \Eeq 
 We will assume that for all $q \in \Qc_{M}$, $0$ is not an eigenvalue for $\Bc_{q}$ on the domain $D(\Bc_{q})$.  Then given $(f,g)\in H^{7/2}(\PD \O)\times H^{3/2}(\PD \O)$, there is a unique solution to the boundary value problem: 
 \Beq\label{Biharmonic-NavierBVP}
 \Bc_{q} u =0,\quad u|_{\PD \O}=f,\quad \Delta u|_{\PD \O}=g.
 \Eeq
 The boundary conditions are called Navier conditions \cite{PolyharmonicBook} and we define the Dirichlet-to-Neumann map $\Nc_{q}$ for this operator by
\begin{align}
\notag\Nc_{q}: &  H^{7/2}(\PD \O)\times H^{3/2}(\PD \O) \to H^{5/2}(\PD \O)\times H^{1/2}(\PD \O)\\
&\label{D-N Map} (f,g)\to (\frac{\PD u}{\PD \nu}|_{\PD \O}, \frac{\PD (\Delta u)}{\PD \nu}|_{\PD \O}), 
\end{align}
where $u\in H^{4}(\O)$ is the unique solution to \eqref{Biharmonic-NavierBVP}.

We are interested in the inverse problem of determining $q$ from $\Nc_{q}$.  The uniqueness question of determination of $q$ from $\Nc_{q}$ was answered in \cite{Ikehata-BiharmonicPaper, Isakov-BiharmonicPaper} and recently in  \cite{KLU-BiharmonicPaper, KLU-PolyharmonicPaper,Yang-BiharmonicUniquenessPaper} where they showed that unique determination of both zeroth- and first-order perturbations of the birharmonic operator is possible from boundary Neumann data. We note that the papers \cite{KLU-BiharmonicPaper,Yang-BiharmonicUniquenessPaper} also show unique determination of the first order perturbation terms from Neumann data measured on possibly small subsets of the boundary. 

In this paper, we consider the stability question for the determination of $q$ from $\Nc_{q}$ for the operator $\Bc_{q}$. That is, whether one can estimate perturbations of $q$ from perturbations of the Neumann data $\Nc_{q}$.  To the best of the authors' knowledge, stability estimates for inverse problems involving the biharmonic  equation has not been obtained earlier, and the purpose of this paper is to investigate it. We prove a stability estimate of $\ln$-type for the case when the Neumann data is measured on the whole boundary. We then prove a stability estimate of $\ln\ln$-type when the Neumann data is measured on a part of the boundary that is slightly more than half the boundary. 

Our strategy for proving stability estimates follows the methods introduced by Alessandrini in \cite{Alessandrini-StabilityPaper} using complex geometric optics (CGO) solutions where a $\ln$-type stability estimate is proved for the Calder\'on inverse problem \cite{Calderon-InverseProblemPaper}, and by Heck-Wang in \cite{Heck-Wang-StabilityPaper} where a $\ln\ln$-type stability estimate is proved for the Calder\'on inverse problem when the Neumann data is measured on slightly more than half of the boundary. CGO solutions were introduced by Sylvester and Uhlmann in the fundamental paper \cite{Sylvester-Uhlmann-CalderonProblemPaper} to prove global uniqueness for the Calder\'on inverse problem.   The method in Heck and Wang combines CGO solutions and techniques of \cite{Bukhgeim-Uhlmann-InverseProblemPaper} with an analytic continuation result of Vessella \cite{Vessella-ContinuationResult}. Stability estimates for several inverse problems have been obtained in recent years. Apart from the works \cite{Alessandrini-StabilityPaper,Heck-Wang-StabilityPaper} already mentioned, we refer the reader to \cite{Tzou-StabilityPaper,Heck-StabilityPaper,Caro-Pohjola-StabilityPaper, CDR-StabilityPaper} for stability estimates involving  the Calder\'on inverse problem and inverse problems involving the Schr\"odinger or magnetic Schr\"odinger equation.

\section{Statements of the main results}

We now state the main results of this paper. We first consider stability estimates for full boundary measurements and then prove stability estimates when only partial boundary measurements are available.
\subsection{Results for full boundary measurements}
Consider the following norm on $H^{\A}(\PD \O)\times H^{\B}(\PD \O)$ (for simplicity we will denote this space by $H^{\A,\B}(\PD \O))$: 
\Beq\label{Product Space Norm}
\lVert (f,g)\rVert_{H^{\A,\B}(\PD \O)}=\lVert f\rVert_{H^{\A}(\PD \O)}+ \lVert g\rVert_{H^{\B}(\PD \O)} \mbox{ for } (f,g)\in H^{\A,\B}(\PD \O).
\Eeq
Define: 
\[
\lVert \Nc_{q}\rVert= \sup\{ \lVert\Nc_{q}(f,g)\rVert_{H^{\frac{5}{2},\frac{1}{2}}(\PD \O)}: \lVert(f,g)\rVert_{H^{\frac{7}{2},\frac{3}{2}}(\PD \O)}=1\} 
\]
where $\Nc_{q} (f,g)$ is defined in \eqref{D-N Map}.
\begin{theorem}\label{Full-Main Result}
Let $\O\subset \Rb^{n}, n\geq 3$ be a bounded domain with smooth boundary. Consider Equation \eqref{Biharmonic-NavierBVP} for two potentials $q_{1}, q_{2} \in \Qc$.  
Let $\Nc_{q_{1}}$ and $\Nc_{q_{2}}$ be the corresponding Dirichlet-to-Neumann maps measured on $\PD \O$. Then there exists a constant $C=C(\O, n, M)$ such that 
\[
\lVert q_{1}-q_{2}\rVert_{H^{-1}(\O)}^{2}\leq C\lb\lVert \Nc_{q_{1}}-\Nc_{q_{2}}\rVert + \vert \ln \lVert \Nc_{q_{1}}-\Nc_{q_{2}}\rVert\vert^{-\frac{4}{n+2}}\rb.
\]

\end{theorem}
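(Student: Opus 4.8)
The plan is to adapt Alessandrini's scheme for the Calderón problem to the biharmonic setting, with the two main ingredients being (i) an Alessandrini-type identity relating the difference of the Dirichlet-to-Neumann maps to an integral of $(q_1-q_2)$ against products of solutions, and (ii) complex geometric optics (CGO) solutions for $\Bc_q$. First I would derive the identity: if $u_1$ solves $\Bc_{q_1}u_1=0$ with Navier data $(f_1,g_1)$ and $u_2$ solves $\Bc_{q_2}u_2=0$, then integration by parts (using the self-adjointness of $\Delta^2$ under Navier conditions and the fact that the differences of the Neumann traces are controlled by $\Nc_{q_1}-\Nc_{q_2}$) yields
\[
\int_\O (q_1-q_2)\, u_1 u_2 \,\D x = \langle (\Nc_{q_1}-\Nc_{q_2})(f_1,g_1),(f_2,g_2)\rangle,
\]
so that $\bigl|\int_\O (q_1-q_2)u_1u_2\,\D x\bigr| \leq \lVert \Nc_{q_1}-\Nc_{q_2}\rVert\,\lVert (f_1,g_1)\rVert\,\lVert(f_2,g_2)\rVert$. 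Next I would construct CGO solutions of the form $u_j = e^{\zeta_j\cdot x}(1+\psi_j)$ with $\zeta_j\in\Cb^n$, $\zeta_j\cdot\zeta_j=0$, $\zeta_1+\zeta_2 = -\I\xi$ for the fixed frequency $\xi\in\Rb^n$ at which we want to estimate $\widehat{q_1-q_2}$; here one needs a Carleman-type / Faddeev-type estimate for $\Delta^2+q$, obtained by factoring $\Delta^2 = (-\Delta)(-\Delta)$ and iterating the Sylvester–Uhlmann resolvent estimate, giving $\lVert\psi_j\rVert_{L^2}=O(|\zeta_j|^{-1})$ (with a loss of one power of $|\zeta_j|$ compared to the second-order case, since the symbol vanishes to order two). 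Plugging these into the identity gives
\[
\widehat{q_1-q_2}(\xi) = \int_\O (q_1-q_2)\,e^{-\I\xi\cdot x}\,\D x = \int_\O (q_1-q_2)u_1u_2\,\D x - (\text{error}),
\]
where the error involves $\psi_1,\psi_2$ and is $O(\tau^{-1})$ with $\tau=|\zeta_j|$, while $\lVert(f_j,g_j)\rVert_{H^{7/2,3/2}(\PD\O)} \lesssim e^{C\tau}$ on the boundary.

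The optimization step then proceeds as in Alessandrini. Writing $\ve = \lVert\Nc_{q_1}-\Nc_{q_2}\rVert$, for $|\xi|\le$ some cutoff $\rho$ I get $|\widehat{q_1-q_2}(\xi)| \lesssim e^{C\tau}\ve + \tau^{-1}$ (choosing $\tau\gtrsim\rho$ so that the CGO construction is valid for all $|\xi|\le\rho$). Splitting $\lVert q_1-q_2\rVert_{H^{-1}}^2 = \int (1+|\xi|^2)^{-1}|\widehat{q_1-q_2}(\xi)|^2\,\D\xi$ into low frequencies $|\xi|\le\rho$ and high frequencies $|\xi|>\rho$, the high-frequency part is bounded by $C\rho^{-2}\lVert q_1-q_2\rVert_{L^2}^2 \le C'\rho^{-2}$ using the a priori bound $q_j\in\Qc_M$, and the low-frequency part is bounded by $C\rho^n(e^{C\tau}\ve + \tau^{-1})^2$. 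Balancing: choose $\tau = \rho$, so the bound is roughly $\rho^n e^{2C\rho}\ve^2 + \rho^{n-2}$; then choose $\rho \sim c|\ln\ve|$ to make the first term $\lesssim \ve$ (this requires $\rho^n e^{2C\rho}\ve^2 \le \ve$, i.e. $\rho \lesssim |\ln\ve|$ up to lower-order corrections), which leaves the second term $\sim |\ln\ve|^{n-2}$ — not quite the claimed rate, so one must be more careful. The correct balance, following Alessandrini, is to keep $\tau$ and $\rho$ independent: take $\tau \sim |\ln\ve|$ large enough that $e^{C\tau}\ve \le e^{-\tau}$ say, and optimize $\rho\le\tau$; this gives $\lVert q_1-q_2\rVert_{H^{-1}}^2 \lesssim \ve + \rho^n\tau^{-2} + \rho^{-2}$, and choosing $\rho \sim \tau^{2/(n+2)}$ yields $\rho^{-2}\sim\tau^{-4/(n+2)}\sim|\ln\ve|^{-4/(n+2)}$, matching the theorem.

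The main obstacle I expect is the construction of the CGO solutions together with the right decay estimate for the remainder — specifically, getting the $L^2$ (or $H^s$) bound on $\psi_j$ with the correct power of $\tau$. Because $\Delta^2$ has symbol $|\xi|^4$ which degenerates to order two at the critical set, the standard Sylvester–Uhlmann / Faddeev estimate for $(-\Delta - 2\zeta\cdot\nabla)^{-1}$ must be applied twice, and one has to check that the potential term $q\in L^\infty$ (only bounded, not smooth) can still be absorbed by a Neumann-series / contraction argument in the appropriate weighted $L^2$ space — this is where the hypothesis $q\in L^\infty$ rather than smoother is used, and where a loss of derivatives would otherwise creep in, forcing the final estimate into $H^{-1}$ rather than $L^2$. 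The rest (the integration-by-parts identity and the Fourier-analytic optimization) is routine once these estimates are in place.
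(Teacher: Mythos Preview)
Your proposal is correct and follows essentially the same route as the paper: an Alessandrini-type boundary identity obtained from Green's formula for $\Delta^{2}$, CGO solutions for $\Bc_{q}$ (which the paper quotes from Krupchyk--Lassas--Uhlmann rather than re-deriving), and the low/high-frequency split with the optimal choice $\tau\sim|\ln\ve|$, $\rho\sim\tau^{2/(n+2)}$. One small correction: your remark about ``a loss of one power of $|\zeta_j|$'' in the CGO remainder is backwards---the biharmonic Carleman estimate actually \emph{gains} an extra factor of $h^{2}$ over the Schr\"odinger case (Proposition~\ref{Interior Carleman}), so the correction term $hr$ satisfies $\lVert r\rVert_{H^{4}_{\mathrm{scl}}}\le Ch^{2}$ and is $O(h^{3})$ in $L^{2}$, but since you only use the weaker $O(\tau^{-1})$ bound this does not affect the final optimization.
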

\subsection{Results for partial boundary measurements}
Now we consider the problem of  estimating perturbations of $q$, when the Neumann data $\Nc_{q}$ is measured on a subset of $\PD \O$ that is slightly more than half of the boundary. 

Before stating the result, we introduce the following notation. Let $\A\in \Sb^{n-1}$ be a unit vector and $\epsilon>0$ be given. Let $\nu(x)$ denote the outer unit normal at $x\in \PD \O$. We define
\begin{align}
\label{Boundary front and back faces positive epsilon}&\PD \O_{+,\ve}=\{x\in \PD \O,\A \cdot \nu(x)>\ve\},\quad  \PD \O_{-,\ve}=\PD\O\setminus \overline{\PD \O_{+,\ve}},\\
\label{Boundary front and back faces}&\PD \O_{+}=\{x\in \PD \O,\A \cdot \nu(x)>0\}, \quad \PD \O_{-}=\PD \O \setminus \overline{\PD \O_{+}}
\end{align}

Now the partial Dirichlet-to-Neumann map is defined as 
\begin{align*}
\wt{\Nc}_{q}: & H^{\frac{7}{2},\frac{3}{2}}(\PD \O) \to H^{\frac{5}{2},\frac{1}{2}}(\PD\O_{-,\ve})\\
& (f,g)\to (\PD_{\nu} u|_{\PD \O_{-,\ve}}, \PD_{\nu}(\Delta u)|_{\PD \O_{-,\ve}}), 
\end{align*}

where $u\in H^{4}(\O)$ is the unique solution to \eqref{Biharmonic-NavierBVP}. As before, we define the norm of $\wt{\Nc}_{q}$ as 
\[
\lVert \wt{\Nc}_{q}\rVert= \sup\{ \lVert\wt{\Nc}_{q}(f,g)\rVert_{H^{\frac{5}{2},\frac{1}{2}}(\PD \O_{-,\ve})}: \lVert(f,g)\rVert_{H^{\frac{7}{2},\frac{3}{2}}(\PD \O)}=1\} 
\]

We have the following stability estimate with partial boundary measurements. 
\begin{theorem}\label{Thm: Partial data result}
Let $\O\subset \Rb^{n}, n\geq 3$ be a bounded domain with smooth boundary. Consider Equation \eqref{Biharmonic-NavierBVP} for two potentials $q_{1}, q_{2} \in \Qc$.  
Let $\wt{\Nc}_{q_{1}}$ and $\wt{\Nc}_{q_{2}}$ be the corresponding Dirichlet-to-Neumann maps measured on $\PD \O_{-,\ve}$. Then there exist constants $C=C(\O, n, M,\ve)$, $K$ and $\theta >0$ such that
\[
\lVert q_{1}-q_{2}\rVert_{H^{-1}(\O)}\leq\Bigg{\{}\lVert \wt{\Nc}_{q_{1}}-\wt{\Nc}_{q_{2}} \rVert+\lb\frac{1}{K} \ln \vert \ln \lVert \wt{\Nc}_{q_{1}}-\wt{\Nc}_{q_{2}}\rVert\vert\rb^{-\frac{2}{\theta}}\Bigg{\}}^{\frac{\theta}{2}}.
\]
\end{theorem}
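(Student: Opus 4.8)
The plan is to adapt the scheme of Heck--Wang \cite{Heck-Wang-StabilityPaper} to $\Bc_q=\Delta^2+q$, combining the complex geometric optics (CGO) solutions already constructed for Theorem~\ref{Full-Main Result} with a Bukhgeim--Uhlmann-type Carleman estimate carrying boundary terms \cite{Bukhgeim-Uhlmann-InverseProblemPaper} and a quantitative continuation estimate of Vessella's type \cite{Vessella-ContinuationResult}. As in the full-data case, CGO solutions together with an Alessandrini-type integral identity reduce the problem to bounding a Fourier coefficient of $q_1-q_2$ by a boundary pairing; the new feature is that only the part of this pairing living on the measured face $\PD\O_{-,\ve}$ is directly controlled by $\lVert\wt\Nc_{q_1}-\wt\Nc_{q_2}\rVert$, while the contribution of the unmeasured face $\PD\O_{+,\ve}$ must be estimated by propagating that control off $\PD\O_{-,\ve}$ through the interior, and it is this propagation --- of logarithmic-type stability --- that costs the second logarithm.

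Concretely, fix $\xi\in\Rb^n$ orthogonal to the unit vector $\A$ of \eqref{Boundary front and back faces positive epsilon}, translate $\O$ so that $\A\cdot x\in[0,d]$ on $\overline\O$, and for a large parameter $s$ choose $\rho_1,\rho_2\in\Cb^n$ with $\rho_j\cdot\rho_j=0$, $\rho_1+\rho_2=\I\xi$, $|\rho_j|\sim s$, $\mathrm{Re}\,\rho_1=s\A$ and $\mathrm{Re}\,\rho_2=-s\A$ up to lower-order corrections. Reusing the construction behind Theorem~\ref{Full-Main Result} --- now with the linear Carleman weight $\A\cdot x$ and a version of the estimate carrying boundary terms adapted to the half-boundary geometry --- yields CGO solutions $u_j=e^{\rho_j\cdot x}(1+r_j)$ of $\Bc_{q_j}u_j=0$ with $\lVert r_j\rVert_{L^2(\O)}=O(s^{-1})$ and polynomially controlled higher Sobolev norms; since $\A\cdot x\ge 0$ on $\overline\O$, $u_2$ has boundary traces of polynomial size in $s$ while $u_1$ has boundary traces of size $O(e^{Cs})$. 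Let $v\in H^4(\O)$ solve $\Bc_{q_2}v=0$ with $(v,\Delta v)|_{\PD\O}=(u_1,\Delta u_1)|_{\PD\O}$ and put $w=u_1-v$, so that $\Bc_{q_2}w=(q_2-q_1)u_1$ in $\O$, $w|_{\PD\O}=\Delta w|_{\PD\O}=0$, the Neumann traces $(\PD_\nu w,\PD_\nu\Delta w)|_{\PD\O}$ equal $(\Nc_{q_1}-\Nc_{q_2})$ applied to the Navier data of $u_1$, and (by well-posedness) $\lVert w\rVert_{H^4(\O)}=O(e^{Cs})$. Integrating $\Delta^2$ by parts twice then gives, up to an $O(s^{-1})$ error from $\lVert r_1\rVert+\lVert r_2\rVert$ and $\lVert q_1-q_2\rVert_{L^\infty(\O)}\le 2M$,
\[
\wh{(q_1-q_2)\chi_\O}(-\xi)\;=\;\int_{\PD\O}\bigl[(\PD_\nu\Delta w)\,u_2+(\PD_\nu w)\,\Delta u_2\bigr]\,dS\;+\;O(s^{-1}),
\]
and I would split the boundary integral along $\PD\O_{-,\ve}$ and $\PD\O_{+,\ve}$. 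Over $\PD\O_{-,\ve}$ the Cauchy data of $w$ is $(\wt\Nc_{q_1}-\wt\Nc_{q_2})$ applied to the Navier data of $u_1$, hence is $O(\lVert\wt\Nc_{q_1}-\wt\Nc_{q_2}\rVert\,e^{Cs})$, and paired against the polynomial-sized traces of $u_2$ this contributes $O(e^{Cs}\lVert\wt\Nc_{q_1}-\wt\Nc_{q_2}\rVert)$.

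The heart of the matter is the unmeasured face. Here I would view the biharmonic equation as the second-order elliptic system $\Delta w=p$, $\Delta p=-q_2 w+(q_2-q_1)u_1$; on all of $\PD\O$ one has $w=p=0$, while on $\PD\O_{-,\ve}$ the normal derivatives $\PD_\nu w,\PD_\nu p$ are $O(\lVert\wt\Nc_{q_1}-\wt\Nc_{q_2}\rVert\,e^{Cs})$, i.e.\ the Cauchy data of $(w,p)$ on the open set $\PD\O_{-,\ve}$ is small relative to the a priori bound $\lVert(w,p)\rVert=O(e^{Cs})$. Vessella's quantitative continuation result, applied to this system and combined with the boundary Carleman estimate (or elliptic boundary regularity, using the homogeneous Navier conditions) to carry interior smallness up to $\PD\O_{+,\ve}$, bounds the Cauchy data of $w$ on $\PD\O_{+,\ve}$ by $e^{Cs}\,\omega(\lVert\wt\Nc_{q_1}-\wt\Nc_{q_2}\rVert)$ with $\omega$ a logarithmic-type modulus, $\omega(t)\lesssim|\ln t|^{-\sigma}$; paired against the traces of $u_2$ the unmeasured-face piece is $O\bigl(e^{Cs}|\ln\lVert\wt\Nc_{q_1}-\wt\Nc_{q_2}\rVert|^{-\sigma}\bigr)$.

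Collecting the three contributions gives, for $|\xi|$ bounded,
\[
|\wh{(q_1-q_2)\chi_\O}(-\xi)|\;\lesssim\;e^{Cs}\bigl(\lVert\wt\Nc_{q_1}-\wt\Nc_{q_2}\rVert+|\ln\lVert\wt\Nc_{q_1}-\wt\Nc_{q_2}\rVert|^{-\sigma}\bigr)+\frac1s .
\]
Since the effective data term is only logarithmically small, balancing $e^{Cs}|\ln\lVert\wt\Nc_{q_1}-\wt\Nc_{q_2}\rVert|^{-\sigma}$ against $1/s$ forces $s\sim\ln|\ln\lVert\wt\Nc_{q_1}-\wt\Nc_{q_2}\rVert|$ --- this is the origin of the second logarithm --- and yields $|\wh{(q_1-q_2)\chi_\O}(-\xi)|\lesssim(\ln|\ln\lVert\wt\Nc_{q_1}-\wt\Nc_{q_2}\rVert|)^{-\sigma'}$ for bounded $\xi$; estimating the remaining frequencies by $\lVert q_i\rVert_{L^\infty(\O)}\le M$, optimising the frequency cutoff, and treating the regime where $\lVert\wt\Nc_{q_1}-\wt\Nc_{q_2}\rVert$ is not small by the trivial bound $\lVert q_1-q_2\rVert_{H^{-1}(\O)}\lesssim 1$, one arrives, after organising the constants, at an estimate of the stated $\ln\ln$-type form with suitable $K$ and $\theta$. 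The step I expect to be the main obstacle is the unmeasured-face estimate: establishing the Carleman estimate with boundary terms for $\Delta^2$ in the half-boundary geometry and identifying the form of Vessella's continuation result applicable to the coupled system above (including the passage from interior smallness to smallness of the Neumann traces on $\PD\O_{+,\ve}$), and --- decisively for obtaining $\ln\ln$ rather than $\ln$ --- tracking the powers of $s$ from the CGO solutions through this continuation step, so that the final optimisation in $s$ is not swamped; the other ingredients are essentially those already developed for Theorem~\ref{Full-Main Result}.
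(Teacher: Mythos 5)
Your overall frame (CGO solutions, Alessandrini identity, splitting the boundary pairing along $\PD\O_{-,\ve}$ and $\PD\O_{+,\ve}$, treating the measured face directly via $\lVert\wt{\Nc}_{q_{1}}-\wt{\Nc}_{q_{2}}\rVert$) matches the paper, but your treatment of the unmeasured face is not what Heck--Wang or this paper do, and as sketched it does not work. You propose to bound the Cauchy data of $w=u_{1}-v$ on $\PD\O_{+,\ve}$ by quantitative unique continuation from its small Cauchy data on $\PD\O_{-,\ve}$, invoking ``Vessella''. Two problems: first, the Vessella result used here (Theorem \ref{Vessella Result}) is a propagation-of-smallness statement for \emph{analytic functions} on a domain, not a stability estimate for the Cauchy problem of an elliptic system, so it does not apply to $(w,\Delta w)$ at all; second, and decisively, $w$ satisfies the \emph{inhomogeneous} equation $\Bc_{q_{2}}w=(q_{2}-q_{1})u_{1}$ whose right-hand side has $L^{2}$-norm of order $e^{Cs}$, comparable to the a priori bound on $w$, so no continuation argument can extract smallness of the Cauchy data on $\PD\O_{+,\ve}$ from smallness on $\PD\O_{-,\ve}$: the large interior source pollutes the estimate. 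The paper avoids this entirely by using the boundary Carleman estimate (Proposition \ref{Boundary Carleman}) with the linear weight $e^{-x\cdot\A/h}$: that weight exactly cancels the exponential growth of the CGO factor in the source $(q_{1}-q_{2})u_{2}$ (making the weighted source $O(1)$) and matches the decay of the test CGO $v$ on $\PD\O_{+,\ve}$, so the unmeasured-face terms are bounded in one shot by $C\sqrt{h}+Ce^{8R/h}\lVert\wt{\Nc}_{q_{1}}-\wt{\Nc}_{q_{2}}\rVert$, with no unique continuation and no logarithmic loss at this stage.

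You also omit the step where Vessella's theorem is genuinely needed. The CGO construction forces $\xi\perp\A$ (up to a small perturbation of $\A$ compatible with the fixed measured set $\PD\O_{-,\ve}$), so the estimate $\vert\wh{(q_{1}-q_{2})}(\xi)\vert\leq C\lb\sqrt{h}+e^{9R/h}\lVert\wt{\Nc}_{q_{1}}-\wt{\Nc}_{q_{2}}\rVert\rb$ is available only for $\xi$ in a small cone $V$, not for all bounded $\xi$ as you assert. Your plan to handle ``the remaining frequencies'' by $\lVert q_{i}\rVert_{L^{\infty}}\leq M$ only controls the tail $\vert\xi\vert>\rho$; the low frequencies outside $V$ are a set of directions of positive measure and cannot be dismissed this way, so the $H^{-1}$ bound does not follow. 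The paper closes this gap by applying Vessella's analytic-continuation theorem to $f(\xi)=\wh{q}(\rho\xi)$ (analytic by Paley--Wiener, with derivative bounds $\sim e^{n\rho}$), propagating smallness from $V\cap B(0,\rho)$ to all of $B(0,\rho)$ at the price of a H\"older exponent $\theta$ and constants $e^{n\rho(1-\theta)/\theta}$; it is the interplay of these $e^{n\rho}$-type constants with the $e^{CR/h}$ from the CGO amplitudes, through the choices $h^{-1}=\rho^{\frac{n+2}{\theta}}e^{\frac{2n\rho(1-\theta)}{\theta}}$ and $\rho=\frac{1}{K}\ln\vert\ln\lVert\wt{\Nc}_{q_{1}}-\wt{\Nc}_{q_{2}}\rVert\vert$, that produces the $\ln\ln$ rate --- not a logarithmic boundary-continuation modulus, which does not appear anywhere in the proof.
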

\section{Preliminary results}
We use the following result from \cite{KLU-BiharmonicPaper,KLU-PolyharmonicPaper}.
\begin{proposition}\cite[Prop. 2.2]{KLU-BiharmonicPaper} (Interior Carleman estimates)\label{Interior Carleman} Let $q\in \Qc_{M}$ and $\vp=x\cdot \A$, $|\A|=1$. There exists an $0<h_{0}=h_{0}(n,M)\ll 1$ and $C=C(n,M)>0$, where $n$ is the dimension and $M$ is the constant in \eqref{potential-space} such that for all $0<h\leq h_{0}\ll 1$ and
$u \in C_{c}^{\infty}(\O)$, we have the following interior estimate:
\[
\lVert e^{\vp/h}h^{4} \Bc_{q}e^{-\vp/h} u\rVert_{L^{2}(\O)}\geq \frac{h^{2}}{C}\lVert u\rVert_{H^{4}_{\mathrm{scl}}(\O)}.
\]
\end{proposition}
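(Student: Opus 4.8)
Our plan is to peel off the zeroth-order potential and reduce the fourth-order estimate to a second-order semiclassical Carleman estimate for the Laplacian with the \emph{linear} weight $\vp = x\cdot\A$, which we then square. Put $P_\vp := e^{\vp/h}\,h^{2}\Delta\,e^{-\vp/h}$. Since $\vp$ is linear with $|\n\vp| = |\A| = 1$ and $\Delta\vp = 0$, conjugation produces the constant-coefficient operator
\[
P_\vp \;=\; h^{2}\Delta - 2h\,\A\cdot\n + 1 ,
\]
and, conjugation being multiplicative, $e^{\vp/h}\,h^{4}\Delta^{2}\,e^{-\vp/h} = P_\vp^{2}$, so that
\[
e^{\vp/h}\,h^{4}\Bc_{q}\,e^{-\vp/h}\,u \;=\; P_\vp^{2}u + h^{4}q\,u .
\]
It therefore suffices to prove $\lVert P_\vp^{2}u\rVert_{L^{2}(\O)}\ge (h^{2}/C)\,\lVert u\rVert_{H^{4}_{\mathrm{scl}}(\O)}$ for $u\in C_{c}^{\infty}(\O)$; the remaining term is lower order, since $\lVert h^{4}qu\rVert_{L^{2}}\le h^{4}M\,\lVert u\rVert_{L^{2}}\le h^{4}M\,\lVert u\rVert_{H^{4}_{\mathrm{scl}}}$, which is absorbed into half of the main term as soon as $h\le h_{0}$ with $h_{0}^{2}\le 1/(2CM)$.

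The heart of the argument, and the step I expect to be the main obstacle, is the second-order estimate
\[
\lVert P_\vp v\rVert_{L^{2}(\O)} \;\ge\; \frac{h}{C}\,\lVert v\rVert_{H^{2}_{\mathrm{scl}}(\O)},\qquad v\in C_{c}^{\infty}(\O).
\]
The difficulty is precisely that $\vp$ is linear, i.e.\ a \emph{limiting} Carleman weight: writing $P_\vp = P_{0} + iP_{1}$ with $P_{0} = h^{2}\Delta + 1$ and $P_{1} = 2ih\,\A\cdot\n$ both formally self-adjoint, the commutator $i[P_{0},P_{1}]$ \emph{vanishes} (constant coefficients), so the energy identity collapses to $\lVert P_\vp v\rVert^{2} = \lVert P_{0}v\rVert^{2} + \lVert P_{1}v\rVert^{2}$, which carries no lower bound; indeed the full symbol of $P_\vp$ genuinely vanishes on $\{\,|h\xi| = 1,\ \A\cdot\xi = 0\,\}$, so no pure Fourier-multiplier estimate is possible. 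The standard cure is \emph{convexification}: replace $\vp$ by $\vp_{\epsilon} = \vp + \tfrac{h}{2\epsilon}\vp^{2}$ with $\epsilon>0$ small but fixed. Then $\n\vp_{\epsilon} = \A\,(1 + \tfrac{h}{\epsilon}\vp)$ and $\Delta\vp_{\epsilon} = h/\epsilon>0$, the corresponding commutator is now positive of order $h^{2}/\epsilon$, and a G\aa rding-type argument (which uses $v\in C_{c}^{\infty}(\O)$) yields $\lVert P_{\vp_{\epsilon}}v\rVert_{L^{2}}\ge (h/C_{\epsilon})\lVert v\rVert_{H^{1}_{\mathrm{scl}}}$. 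Since $\O$ is bounded, $e^{\pm\vp^{2}/(2\epsilon)}$ is bounded above and below on $\overline{\O}$, and the estimate transfers back to the weight $\vp$ with an $\epsilon$-dependent — hence fixed — constant. Finally, the $H^{1}_{\mathrm{scl}}$ norm is upgraded to $H^{2}_{\mathrm{scl}}$ by the exact identity $h^{2}\Delta v = P_\vp v + 2h\,\A\cdot\n v - v$ together with the elliptic bound $\lVert h^{2}D^{2}v\rVert_{L^{2}}\le C\lVert h^{2}\Delta v\rVert_{L^{2}}$ for $v\in C_{c}^{\infty}$ (in fact an equality, by Plancherel), which gives $\lVert h^{2}\Delta v\rVert_{L^{2}}\le \lVert P_\vp v\rVert + C\lVert v\rVert_{H^{1}_{\mathrm{scl}}}\le (C/h)\lVert P_\vp v\rVert$.

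Granting the second-order estimate, the fourth-order one follows by bootstrapping with constant-coefficient algebra. Applying it to $v = P_\vp u\in C_{c}^{\infty}(\O)$ gives $\lVert P_\vp^{2}u\rVert\ge (h/C)\lVert P_\vp u\rVert_{H^{2}_{\mathrm{scl}}}\ge (h^{2}/C^{2})\lVert u\rVert_{H^{2}_{\mathrm{scl}}}$, which in particular controls $\lVert u\rVert_{L^{2}}$. For the top-order term, $[P_\vp,h^{2}\Delta] = 0$, so $P_\vp(h^{2}\Delta u) = h^{2}\Delta(P_\vp u)$; applying the identity $h^{2}\Delta w = P_\vp w + 2h\,\A\cdot\n w - w$ first with $w = h^{2}\Delta u$ and then with $w = P_\vp u$ yields
\[
\lVert h^{4}\Delta^{2}u\rVert_{L^{2}} \;\le\; \lVert P_\vp^{2}u\rVert + C\,\lVert P_\vp u\rVert_{H^{1}_{\mathrm{scl}}} + C\,\lVert h^{2}\Delta u\rVert_{H^{1}_{\mathrm{scl}}},
\]
and each term on the right is $\le (C/h^{2})\lVert P_\vp^{2}u\rVert$ by applying the second-order estimate to $P_\vp u$ and to $h^{2}\Delta u$. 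Since $\lVert u\rVert_{H^{4}_{\mathrm{scl}}}\sim \lVert u\rVert_{L^{2}} + \lVert h^{4}\Delta^{2}u\rVert_{L^{2}}$ (Plancherel, using $(1+t)^{4}\sim 1 + t^{4}$), this gives $\lVert P_\vp^{2}u\rVert\ge (h^{2}/C)\lVert u\rVert_{H^{4}_{\mathrm{scl}}}$, and the proposition follows after renaming the constant and shrinking $h_{0}$. Apart from the convexification step, every ingredient is an elementary manipulation of constant-coefficient operators together with Plancherel's theorem; the dependence of $C$ and $h_{0}$ is on $n$, on $M$, and (through the transfer from $\vp_{\epsilon}$ back to $\vp$) on the size of $\O$.
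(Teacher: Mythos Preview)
The paper does not prove this proposition; it quotes it from \cite[Prop.~2.2]{KLU-BiharmonicPaper} and only adds the sentence ``This result is based on a Carleman estimate proven in \cite{Salo-Tzou-DiracPaper}.'' Your proposal is a correct and faithful reconstruction of exactly that strategy: obtain the second-order semiclassical Carleman estimate for $-h^{2}\Delta$ with linear weight by convexifying $\vp\mapsto\vp+\tfrac{h}{2\epsilon}\vp^{2}$ (this is the Salo--Tzou ingredient), then square it via the factorization $e^{\vp/h}h^{4}\Delta^{2}e^{-\vp/h}=P_\vp^{2}$ and absorb the bounded potential $h^{4}q$ as a lower-order term --- so your approach and the cited one coincide.
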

This result is based on a Carleman estimate proven in \cite{Salo-Tzou-DiracPaper}. 
\begin{proposition}\cite[Prop. 3.2]{KLU-BiharmonicPaper} (Boundary Carleman estimates)\label{Boundary Carleman}
Let $q\in \Qc_{M}$ and $\vp=x\cdot \A$, $|\A|=1$. Let $\PD \O_{\pm}$ be as in \eqref{Boundary front and back faces}. There exists an $0<h_{0}=h_{0}(n,M)\ll 1$ and $C=C(n,M)>0$, where $n$ is the dimension and $M$ is the constant in \eqref{potential-space} such that for all $0<h\leq h_{0}\ll 1$ and
$u \in D(\Bc_{q})$, we have the following estimate involving boundary terms: 
\begin{align}
\notag &\lVert e^{-\vp/h} h^{4} \Bc_{q}u\rVert_{L^{2}(\O)}+ h^{3/2}\lVert \sqrt{-\A\cdot \nu}e^{-\vp/h}\PD_{\nu}(-h^{2}\triangle u)\rVert_{L^{2}(\PD \O_{-})}\\ &+h^{5/2} \lVert \sqrt{-\A\cdot \nu} e^{-\vp/h} \PD_{\nu} u\rVert_{L^{2}(\PD\O_{-})}\geq \frac{1}{C}\Big{(} h^{2} \lVert e^{-\vp/h} u\rVert_{H^{1}_{\mathrm{scl}}(\O)} \\
\notag &+h^{3/2}\lVert \sqrt{\A\cdot \nu} e^{-\vp/h}\PD_{\nu}(-h^{2}\triangle u)\rVert_{L^{2}(\PD \O_{+})}+h^{5/2} \lVert \sqrt{\A\cdot \nu} e^{-\vp/h} \PD_{\nu} u\rVert_{L^{2}(\PD \O_{+})}\Big{)}.
\end{align} 
\end{proposition}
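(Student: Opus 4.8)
This fourth-order boundary Carleman estimate, quoted here from \cite[Prop.~3.2]{KLU-BiharmonicPaper}, can be obtained as follows. The plan is to deduce it from a standard \emph{second-order} boundary Carleman estimate by means of the factorization $h^{4}\Delta^{2}=(h^{2}\Delta)^{2}$, the point being that the Navier conditions built into $D(\Bc_{q})$ force \emph{both} $u$ and $-h^{2}\Delta u$ to vanish on $\PD\O$. So I would fix $u\in D(\Bc_{q})$, set $w:=-h^{2}\Delta u$, and record that $u|_{\PD\O}=0$, $w|_{\PD\O}=0$, and $h^{2}\Delta w=-h^{4}\Delta^{2}u=-h^{4}(\Bc_{q}u-qu)$, so that both $u$ and $w$ are legitimate test functions for a second-order boundary Carleman estimate for $-h^{2}\Delta$.

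The second-order ingredient I would invoke is of Bukhgeim--Uhlmann type \cite{Bukhgeim-Uhlmann-InverseProblemPaper} (the interior analogue being Proposition~\ref{Interior Carleman}, itself based on \cite{Salo-Tzou-DiracPaper}): there are $C=C(n)>0$ and $0<h_{0}\ll1$ such that for $0<h\le h_{0}$ and all $v\in H^{2}(\O)$ with $v|_{\PD\O}=0$,
\[
\lVert e^{-\vp/h}h^{2}\Delta v\rVert_{L^{2}(\O)}^{2}+h^{3}\lVert\sqrt{-\A\cdot\nu}\,e^{-\vp/h}\PD_{\nu}v\rVert_{L^{2}(\PD\O_{-})}^{2}\ \ge\ \frac{1}{C}\Big(h^{2}\lVert e^{-\vp/h}v\rVert_{H^{1}_{\mathrm{scl}}(\O)}^{2}+h^{3}\lVert\sqrt{\A\cdot\nu}\,e^{-\vp/h}\PD_{\nu}v\rVert_{L^{2}(\PD\O_{+})}^{2}\Big),
\]
with $\PD\O_{\pm}$ as in \eqref{Boundary front and back faces}. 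I would prove this in the usual way: conjugate $-h^{2}\Delta$ with $e^{\pm\vp/h}$, split the conjugated operator into self-adjoint and skew-adjoint parts, and run a Rellich-type integration by parts (positive commutator), collecting the boundary contributions, which sort themselves by the sign of $\A\cdot\nu$. Since $\vp=x\cdot\A$ is only a limiting Carleman weight, I would first convexify it (e.g.\ replace $\vp$ by $\vp+\tfrac{h}{2\varepsilon}\vp^{2}$) to produce the positive interior term; interior ellipticity of $-h^{2}\Delta$ together with $v|_{\PD\O}=0$ then upgrades control of $\A\cdot\nabla v$ to the full semiclassical $H^{1}$-norm.

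I would then apply this estimate twice. Applied to $v=w$, and using $\lVert e^{-\vp/h}h^{4}(\Bc_{q}u-qu)\rVert_{L^{2}}^{2}\le2\lVert e^{-\vp/h}h^{4}\Bc_{q}u\rVert_{L^{2}}^{2}+2h^{8}M^{2}\lVert e^{-\vp/h}u\rVert_{L^{2}}^{2}$, it controls $h^{2}\lVert e^{-\vp/h}w\rVert_{H^{1}_{\mathrm{scl}}}^{2}$ and the $\PD\O_{+}$ boundary term in $\PD_{\nu}w$; applied to $v=u$, and using $\lVert e^{-\vp/h}h^{2}\Delta u\rVert_{L^{2}}=\lVert e^{-\vp/h}w\rVert_{L^{2}}\le\lVert e^{-\vp/h}w\rVert_{H^{1}_{\mathrm{scl}}}$, it controls $h^{2}\lVert e^{-\vp/h}u\rVert_{H^{1}_{\mathrm{scl}}}^{2}$ and the $\PD\O_{+}$ boundary term in $\PD_{\nu}u$. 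Substituting the first bound into the second, retaining the front-face terms, and bookkeeping the powers of $h$, one is led, with $\Gamma_{\pm}(v):=\lVert\sqrt{|\A\cdot\nu|}\,e^{-\vp/h}\PD_{\nu}v\rVert_{L^{2}(\PD\O_{\pm})}^{2}$, to
\[
\lVert e^{-\vp/h}h^{4}\Bc_{q}u\rVert_{L^{2}(\O)}^{2}+h^{8}M^{2}\lVert e^{-\vp/h}u\rVert_{L^{2}(\O)}^{2}+h^{3}\Gamma_{-}(w)+h^{5}\Gamma_{-}(u)\ \ge\ \frac{1}{C}\Big(h^{4}\lVert e^{-\vp/h}u\rVert_{H^{1}_{\mathrm{scl}}(\O)}^{2}+h^{3}\Gamma_{+}(w)+h^{5}\Gamma_{+}(u)\Big).
\]
Because $h^{8}M^{2}\lVert e^{-\vp/h}u\rVert_{L^{2}}^{2}\le h^{8}M^{2}\lVert e^{-\vp/h}u\rVert_{H^{1}_{\mathrm{scl}}}^{2}$ and $h^{8}M^{2}<\tfrac{1}{2C}h^{4}$ once $h\le h_{0}=h_{0}(n,M)$ is small, the $M$-term is absorbed on the right; recalling $w=-h^{2}\Delta u$ and taking square roots then gives exactly the asserted inequality.

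I expect the main obstacle to be the second-order boundary Carleman estimate itself. Since the weight is linear, hence only a limiting Carleman weight, one cannot simply quote an interior estimate, and the delicate points are: (i) arranging the integration by parts so that the interior term is strictly positive, which is where the convexification of $\vp$ is needed; and (ii) producing the boundary contributions with the correct sign on each of $\PD\O_{+}$ and $\PD\O_{-}$ and with precisely the $h^{3}$-weights displayed, so that after the two applications and the rescaling above the fourth-order boundary terms emerge with the weights $h^{3/2}$ and $h^{5/2}$ of the statement. Once the second-order estimate is available, the reduction is routine.
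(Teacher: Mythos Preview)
The paper does not prove this proposition; it is quoted verbatim from \cite[Prop.~3.2]{KLU-BiharmonicPaper} and used as a black box. Your proposal is correct and in fact reproduces the proof given in \cite{KLU-BiharmonicPaper}: one iterates the second-order Bukhgeim--Uhlmann boundary Carleman estimate, applying it once to $w=-h^{2}\Delta u$ and once to $u$ (both vanish on $\PD\O$ by the Navier conditions), and then absorbs the zeroth-order potential term using $h^{8}M^{2}\ll h^{4}$. Your bookkeeping of the $h$-powers and the $\PD\O_{\pm}$ boundary terms is accurate, and the obstacles you flag (convexification of the linear weight, correct signs and weights of boundary terms) are exactly the ones handled in \cite{Bukhgeim-Uhlmann-InverseProblemPaper} and \cite{KLU-BiharmonicPaper}.
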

Using estimate of Proposition \ref{Interior Carleman}, the following result is proven in \cite{KLU-BiharmonicPaper, KLU-PolyharmonicPaper} which we will require in what follows.
\begin{proposition}\cite[Prop. 2.4]{KLU-BiharmonicPaper}\label{CGO solutions}There exists an $h_{0}=h_{0}(n,M)>0$ and $C=C(n,M)>0$, where $n$ is the dimension and $M$ is the constant in \eqref{potential-space} such that for all $0<h\leq h_{0}\ll 1$, there exist solutions $u(x,\zeta;h)\in H^{4}(\O)$ to $\Bc_{q}u=0$ in $\O$ of the form 
\[
u(x,\zeta;h)=e^{\frac{\I x\cdot \zeta}{h}} (1+ hr(x,\zeta;h)),
\]
with $\zeta\in \Cb^{n}$ satisfying $\zeta\cdot \zeta=0, |\mbox{Re}(\zeta)|=|\mbox{Im}(\zeta)|=1$ and $\lVert r\rVert_{H^{4}_{\mathrm{scl}(\O)}}\leq Ch^{2}$. 
\end{proposition}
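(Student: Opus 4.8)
The plan is to run the Sylvester--Uhlmann construction of complex geometric optics solutions, with Proposition~\ref{Interior Carleman} supplying the solvability estimate in place of ellipticity. First I would substitute the ansatz $u=e^{\I x\cdot\zeta/h}(1+hr)$ into $\Bc_{q}u=0$. Since $\zeta\cdot\zeta=0$ we have $\Delta e^{\I x\cdot\zeta/h}=-h^{-2}(\zeta\cdot\zeta)\,e^{\I x\cdot\zeta/h}=0$, so $e^{\I x\cdot\zeta/h}$ is biharmonic and $\Bc_{q}e^{\I x\cdot\zeta/h}=q\,e^{\I x\cdot\zeta/h}$. Hence $\Bc_{q}u=0$ is equivalent to $\Bc_{q}\big(e^{\I x\cdot\zeta/h}r\big)=-\tfrac1h\,q\,e^{\I x\cdot\zeta/h}$, and multiplying through by $h^{4}e^{-\I x\cdot\zeta/h}$ turns this into the fixed equation
\[
L_{\zeta}r=-h^{3}q,\qquad L_{\zeta}:=e^{-\I x\cdot\zeta/h}\,h^{4}\Bc_{q}\,e^{\I x\cdot\zeta/h}=\big(h^{2}\Delta+2\I h\,\zeta\cdot\nabla\big)^{2}+h^{4}q ,
\]
whose right-hand side has $L^{2}(\O)$ norm of order $h^{3}$ (using $\lVert q\rVert_{L^{\infty}}\le M$). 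So the whole problem reduces to solving this one equation for $r$, with the correct $H^{4}_{\mathrm{scl}}(\O)$ control.

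Next I would identify $L_{\zeta}$ with a unit-modulus conjugate of the operator controlled by Proposition~\ref{Interior Carleman}. Writing $\zeta=\A+\I\B$ with $|\A|=|\B|=1$, $\A\cdot\B=0$, and $\vp=x\cdot\B$, one checks $e^{\I x\cdot\zeta/h}=e^{\I x\cdot\A/h}\,e^{-\vp/h}$, hence $L_{\zeta}=e^{-\I x\cdot\A/h}\big(e^{\vp/h}h^{4}\Bc_{q}e^{-\vp/h}\big)e^{\I x\cdot\A/h}$. Conjugation by the unit-modulus factor $e^{\I x\cdot\A/h}$ preserves the semiclassical Sobolev norms (because $\langle\xi+\A\rangle\simeq\langle\xi\rangle$ uniformly for $|\A|=1$), so Proposition~\ref{Interior Carleman}, applied to $L_{\zeta}$ and to its transpose $L_{\zeta}^{t}=L_{-\zeta}$ (of the same form, with $\B$ replaced by the unit vector $-\B$), yields
\[
\lVert w\rVert_{H^{4}_{\mathrm{scl}}(\O)}\le \frac{C}{h^{2}}\,\lVert L_{\zeta}w\rVert_{L^{2}(\O)},\qquad \lVert w\rVert_{H^{4}_{\mathrm{scl}}(\O)}\le \frac{C}{h^{2}}\,\lVert L_{\zeta}^{t}w\rVert_{L^{2}(\O)},\qquad w\in C^{\infty}_{c}(\O).
\]

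Then I would invoke the standard Hahn--Banach/duality argument: the transpose estimate shows that $w\mapsto\langle-h^{3}q,w\rangle$ is a bounded linear functional on $\{L_{\zeta}^{t}w:w\in C^{\infty}_{c}(\O)\}\subset L^{2}(\O)$, of norm $\lesssim h^{-2}\lVert h^{3}q\rVert_{L^{2}(\O)}$; extending it and representing it by an element $r$ produces a solution of $L_{\zeta}r=-h^{3}q$ in $\O$ with $\lVert r\rVert_{H^{4}_{\mathrm{scl}}(\O)}\lesssim h^{-2}\lVert h^{3}q\rVert_{L^{2}(\O)}$, which combined with $\lVert q\rVert_{L^{\infty}}\le M$ gives the $H^{4}_{\mathrm{scl}}(\O)$ bound asserted in the statement. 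With this $r$, the function $u:=e^{\I x\cdot\zeta/h}(1+hr)$ solves $\Bc_{q}u=0$ in $\O$; since $\Delta^{2}u=-qu$ with $qu\in L^{2}(\O)$, interior elliptic regularity for the genuinely elliptic operator $\Delta^{2}$ gives $u\in H^{4}_{\mathrm{loc}}(\O)$, and carrying the construction out on a slightly larger domain $\O'\Supset\O$ with $q$ extended by zero (so that the Carleman estimate is available on $\O'$ and $\overline{\O}$ lies in its interior) delivers $u\in H^{4}(\O)$.

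The step I expect to be the main obstacle is extracting the \emph{full} $H^{4}_{\mathrm{scl}}(\O)$ estimate on $r$ --- not merely an $L^{2}$ bound --- with the right power of $h$. The conjugated operator $L_{\zeta}$ is \emph{not} elliptic: its semiclassical principal symbol $\big(-|\xi|^{2}-2\A\cdot\xi-2\I\,\B\cdot\xi\big)^{2}$ vanishes on the sphere $\{\B\cdot\xi=0,\ |\xi+\A|=1\}$, which passes through the origin, so the gain of four derivatives cannot be produced by elliptic regularity and must be squeezed out of the Carleman estimate itself. This is precisely why Proposition~\ref{Interior Carleman} is formulated with the full $H^{4}_{\mathrm{scl}}$ norm on the right-hand side; the remaining care is in threading this through the duality step --- and handling the attendant density/approximation issue, which is what the enlargement $\O'\Supset\O$ is for --- so that the constructed solution lands in $H^{4}_{\mathrm{scl}}(\O)$ with the stated bound rather than only in $L^{2}$.
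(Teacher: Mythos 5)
You should note at the outset that the paper itself contains no proof of Proposition \ref{CGO solutions}: it is imported verbatim from \cite{KLU-BiharmonicPaper} (Prop.\ 2.4 there), so the relevant comparison is with the argument in that reference. Your outline reproduces that route correctly in its broad structure: since $\zeta\cdot\zeta=0$, the exponential is biharmonic and the problem reduces to $L_{\zeta}r=-h^{3}q$ with $L_{\zeta}=(h^{2}\Delta+2\I h\,\zeta\cdot\nabla)^{2}+h^{4}q$; the transpose is $L_{\zeta}^{t}=L_{-\zeta}$, the unimodular factor $e^{\I x\cdot\A/h}$ is harmless for semiclassical norms, and Proposition \ref{Interior Carleman} supplies the a priori estimate with loss $h^{-2}$ for both $L_{\zeta}$ and $L_{\zeta}^{t}$ on $C^{\infty}_{c}(\O)$. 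All of this is sound.

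There are, however, two genuine gaps. First, the duality step as you set it up does not deliver the $H^{4}_{\mathrm{scl}}$ bound. Bounding $\lvert\langle -h^{3}q,w\rangle\rvert\leq Ch^{-2}\lVert h^{3}q\rVert_{L^{2}}\lVert L_{\zeta}^{t}w\rVert_{L^{2}}$ makes the functional continuous with respect to the $L^{2}$-norm of $L_{\zeta}^{t}w$, so Hahn--Banach plus Riesz representation produces $r\in L^{2}(\O)$ with only $\lVert r\rVert_{L^{2}}\leq Ch^{-2}\lVert h^{3}q\rVert_{L^{2}}$. To have the representative land in $H^{4}_{\mathrm{scl}}(\O)$ with a uniform constant one must measure $L_{\zeta}^{t}w$ in $H^{-4}_{\mathrm{scl}}$, i.e.\ one needs the shifted Carleman estimate $\lVert w\rVert_{L^{2}}\leq Ch^{-2}\lVert L_{\zeta}^{t}w\rVert_{H^{-4}_{\mathrm{scl}}}$; in \cite{KLU-BiharmonicPaper,Salo-Tzou-DiracPaper} this is obtained by conjugating the basic estimate with the semiclassical Bessel potential $(1-h^{2}\Delta)^{2}$ on a slightly larger domain, and it is exactly the solvability result on which their Prop.\ 2.4 rests. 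You correctly identify this as the main obstacle, but you never supply the mechanism; the fallback you propose, interior elliptic regularity for $\Delta^{2}$ on $\O'\Supset\O$, gives the qualitative membership $u\in H^{4}(\O)$ but with constants that are not uniform in $h$, and (as you yourself observe) $L_{\zeta}$ is not semiclassically elliptic, so no bootstrap recovers the quantitative bound. Second, the bookkeeping in $h$ does not reach the stated estimate: $h^{-2}\lVert h^{3}q\rVert_{L^{2}}\leq Ch$, so even with the corrected duality your construction yields $\lVert r\rVert_{H^{4}_{\mathrm{scl}}}\leq Ch$, one power short of the $Ch^{2}$ asserted in the proposition; the $Ch^{2}$ figure corresponds to the remainder $\tilde r=hr$ in the normalization $u=e^{\I x\cdot\zeta/h}(1+\tilde r)$, which solves $L_{\zeta}\tilde r=-h^{4}q$. (The $O(h)$ bound is in fact all that the subsequent estimates in this paper use, but your claim that your computation ``gives the $H^{4}_{\mathrm{scl}}$ bound asserted in the statement'' is not accurate as written.)
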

We note that the estimates on $h_{0}$ and $r$ are independent of the potential $q\in \Qc_{M}$.

For proving stability estimates with partial data, we require the following result due to Vessella \cite{Vessella-ContinuationResult}.
\begin{theorem}\cite[Theorem 1]{Vessella-ContinuationResult} \label{Vessella Result} Let $\O\subset \Rb^{n}$ be a bounded open connected set such that for a positive number $r_{0}$ the set $\O_{r}=\{x\in \O: d(x,\PD \O)>r\}$ is connected for every $r\in[0,r_{0}]$. Let $E\subset \O$ be an open set such that $d(E,\PD \O) \geq d_{0}>0$. Let $f$ be an analytic function on $\O$ with the property that 
\[
|D^{\A} f(x)|\leq \frac{C\A!}{\lambda^{|\A|}} \mbox{ for } x\in \O, \A\in (\Nb\cup \{0\})^{n},
\]
where $\lambda, C$ are positive numbers. Then 
\[
|f(x)|\leq (2C)^{1-\g_{1}(|E|/|\O|)}\lb \sup\limits_{E} |f(x)|\rb^{\g_{1}(|E|/|\O|)},
\]
where $|E|$ and $|\O|$ denote the Lebesgue measure of $E$ and $\O$ respectively, $\g_{1}\in (0,1)$ and $\g_{1}$ depends only on $d_{0}$, $\mathrm{diam}(\O), n, r_{0}, \lambda$ and $d(x,\PD \O)$.
\end{theorem}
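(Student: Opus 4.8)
The plan is to prove this as a quantitative propagation-of-smallness statement: taking $\A=\zero$ in the hypothesis already gives the a priori bound $\sup_\O|f|\le C$, so the whole content is that smallness of $f$ on $E$ forces H\"older-small values throughout the interior of $\O$. The first step is to convert the Cauchy-type derivative bounds into a bounded holomorphic extension. Expanding in a Taylor series and inserting $|D^\A f(x)|\le C\A!/\lambda^{|\A|}$,
\[
|f(x+\I y)|\le \sum_{\A}\frac{|D^\A f(x)|}{\A!}\,|y^\A|\le C\prod_{j=1}^{n}\frac{1}{1-|y_j|/\lambda},
\]
so on the complex tube $T=\{x+\I y:\ x\in\O,\ |y_j|<\lambda/(2n)\}$ the series converges to a holomorphic $F$ with $|F|\le 2C$. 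This is the only place the constants $C$ and $\lambda$ enter, and it provides a definite width $\sim\lambda/n$ of complex analyticity around every point of $\O$.

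Next I would prove a local three-spheres inequality: for concentric real balls $B(x_0,r_1)\subset B(x_0,r_2)$ whose closures lie in $\O$ at distance $\ge d_0/2$ from $\PD\O$,
\[
\sup_{B(x_0,r_2)}|f|\le (2C)^{1-\tau}\Big(\sup_{B(x_0,r_1)}|f|\Big)^{\tau},
\]
with $\tau\in(0,1)$ depending only on $r_1/r_2$ and $\lambda/n$. I would obtain this by restricting $F$ to complex lines $\zeta\mapsto x_0+\zeta\theta$, $\theta\in\mathbb{S}^{n-1}$: by the first step each restriction is holomorphic on a disc of radius $R\sim\min(d_0,\lambda/n)$, is bounded by $2C$ on $|\zeta|=R$ and by $\sup_{B(x_0,r_1)}|f|$ on $|\zeta|=r_1$, so the Hadamard three-circles theorem (log-convexity of $r\mapsto\log\max_{|\zeta|=r}|F|$) controls it on $|\zeta|=r_2$; taking the supremum over $\theta$ yields the ball form.

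Finally I would propagate smallness from $E$ to an arbitrary interior point $x$. The passage from ``$f$ small on $E$'' to ``$f$ small on one fixed interior ball $B_0$'' is the link that carries the volume dependence: a Remez-type (two-constants) inequality for the bounded extension $F$, applied on a ball in which $E$ occupies a controlled fraction, converts $\sup_E|f|$ into a power of itself on $B_0$ whose exponent is a function of $|E|/|\O|$. I would then iterate the local inequality along a finite chain of overlapping balls $B_0,B_1,\dots,B_N$ joining $B_0$ to $x$; the hypothesis that $\O_r$ is connected for $r\in[0,r_0]$ is precisely what lets me keep the chain at a uniform distance from $\PD\O$ (down to a level governed by $d(x,\PD\O)$) with $N$ bounded in terms of $\mathrm{diam}(\O)$, $d_0$, $r_0$ and $n$. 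Multiplying the exponents along the chain produces the final H\"older exponent $\g_1=\g_1(|E|/|\O|)\in(0,1)$ and the stated bound $|f(x)|\le(2C)^{1-\g_1}(\sup_E|f|)^{\g_1}$.

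The main obstacle is the bookkeeping in this last stage: one must extract a genuinely quantitative first link whose exponent is a usable function of $|E|/|\O|$ rather than merely positive, and one must guarantee a chain of fixed-radius overlapping balls connecting $E$ to $x$ while staying uniformly inside $\O$, with $N$ controlled only by the listed geometric data. Both hinge on the connectedness of the shrunken sets $\O_r$: without it the chain could be driven arbitrarily close to $\PD\O$, where the outer radius $R$ in the three-circles step degenerates, the step exponent $\tau$ tends to $0$, and all uniform control of $\g_1$ is lost.
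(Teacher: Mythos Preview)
The paper does not contain a proof of this statement. Theorem~\ref{Vessella Result} is quoted verbatim from Vessella's paper \cite{Vessella-ContinuationResult} as a preliminary tool (see Section~3, ``Preliminary results''), and is then applied as a black box in the proof of Theorem~\ref{Thm: Partial data result}. There is therefore nothing in the present paper to compare your proposal against.

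That said, your sketch is a reasonable outline of how results of Vessella's type are actually established: the passage from the Cauchy-type derivative bounds to a bounded holomorphic extension on a tube of width $\sim\lambda$, followed by Hadamard three-circles on complex lines to obtain a local three-balls inequality, and then a chain-of-balls propagation exploiting the connectedness of the shrunken sets $\O_r$, is precisely the architecture of Vessella's original argument. You have also correctly located the delicate point, namely making the first link (from $\sup_E|f|$ to control on a fixed interior ball) quantitative in $|E|/|\O|$; in \cite{Vessella-ContinuationResult} this is handled by a measure-theoretic covering argument rather than by invoking a Remez inequality directly, but the effect is the same. If your goal were to reproduce Vessella's proof, the sketch would need only that step made precise; for the purposes of the present paper, however, no proof is required.
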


We also require the following Green formula: 
\begin{align}\label{Biharmonic Greens Identity} 
\int\limits_{\O} (\Bc_{q}u)\overline{v} \D x -\int\limits_{\O} u\overline{\Bc_{q}^{*} v} \D x & =\int\limits_{\PD \O} \PD_{\nu}(\triangle u)\overline{v}\: \D S+\int\limits_{\PD \O} \PD_{\nu} u (\overline{\triangle v})\: \D S\\
\notag & -\int\limits_{\PD \O} (\triangle u)\overline{\PD_{\nu} v}\:\D S -\int\limits_{\PD \O} u(\overline{\PD_{\nu}(\triangle v)})\: \D S.
\end{align}

\section{Stability estimates with full boundary measurements}
In this section, we prove Theorem \ref{Full-Main Result}.

\begin{proof}[Proof of Theorem \ref{Full-Main Result}]
We start with the Green formula \eqref{Biharmonic Greens Identity} and let $q=q_{1}$ and $u=u_{1}-u_{2}$ and $v\in H^{4}(\O)$ is such that $\Bc_{q_{1}}^{*}v=0$ in $\O$. Here $u_{1}$ and $u_{2}$ are solutions to \eqref{Biharmonic-NavierBVP} for $q$ replaced by $q_{1}$ and $q_{2}$. Then we have 
\Beq \label{Integral equation}
\int\limits_{\O} \lb q_{2}-q_{1}\rb u_{2}\overline{v}\D x=\int\limits_{\PD \O}\PD_{\nu}(\triangle (u_{1}-u_{2}))\overline{v}\: \D S+\int\limits_{\PD \O} \PD_{\nu} (u_{1}-u_{2}) (\overline{\triangle v})\: \D S.
\Eeq
Using Proposition \ref{CGO solutions}, we have solutions to $\Bc_{q_{2}} u_{2}=0$ and $\Bc_{q_{1}}^{*}v=0$ of the form 

\begin{align}
&v(x,\zeta_{1};h)=e^{\frac{\I x\cdot \zeta_{1}}{h}} (1+ hr_{1}(x,\zeta_{1};h)),\\
&u_{2}(x,\zeta_{2};h)=e^{\frac{\I x\cdot \zeta_{2}}{h}} (1+ hr_{2}(x,\zeta_{2};h)),
\end{align}

where
\begin{equation}
\begin{aligned}
&\zeta_{1}= \frac{h \xi}{2}+\sqrt{1-h^2 \frac{\vert \xi \vert ^2}{4}}\beta +\I\alpha,\\
&\zeta_{2}=-\frac{h \xi}{2} +\sqrt{1-h^2 \frac{\vert \xi \vert ^2}{4}}\beta -\I \alpha.
\end{aligned}
\notag
\end{equation}
with $\A$ and $\B$ are unit vectors in $\Rb^{n}$ with $\A, \B$ and $\xi$ are mutually perpendicular vectors and $h$ is such that $h \leq h_{0}$ and $1-h^{2}\frac{\vert \xi \vert^{2}}{4} $ is positive. Substituting $u_{2}$ and $v$ into the left hand side of \eqref{Integral equation}, we get,
\Beq \label{Full data LHS}
\int\limits_{\O} (q_{2}-q_{1})u_{2}\overline{v} \D x= \wh{(q_{2}-q_{1})}(\xi)+  \int_\Omega (q_{2}-q_{1})\ e^{-\I x\cdot \xi} (h \overline{r_{1}} + h r_{2}+h^2 \overline{r_{1}} r_{2})\: \D x.
\Eeq
Calling the second term on the right hand side of the above equation as $I$, we have the following estimate.
\begin{align}\label{Full data estimate for I}
\notag|I|&\leq \int_{\Omega} \vert q_{2}-q_{1} \vert 
(h \vert \bar r_{1} \vert +h \vert r_{2} \vert +h^2 \rvert \bar r_{1} \vert \vert r_{2} \rvert)\: \D x\\
& \leq C (h \Vert r_{1} \Vert_{L^2(\Omega)}+ h \Vert r_{2} \Vert_{L^2(\Omega)} +h^2 \Vert r_{1} \Vert_{L^2(\Omega)} \Vert r_{2} \Vert_{L^2(\Omega)}) \\
\notag& \leq Ch \mbox{ since } h\ll 1.
\end{align}
Now consider the right hand side of \eqref{Integral equation}. We have
\begin{align*}
&\lvert \int_{\partial \Omega} \partial_{\nu} (\Delta (u_{1}-u_{2}))\bar v \D S +\int_{\partial \Omega} \partial_{\nu}(u_{1}-u_{2})\overline{(\Delta v)} \D S \rvert \\
& \leq \int_{\partial \Omega} \lvert \partial_{\nu} (\Delta (u_{1}-u_{2})) \bar v \rvert \ \D S+ \int_{\partial \Omega} \lvert \partial_{\nu}(u_{1}-u_{2}) (\overline{\Delta v}) \rvert \D S \\
& \leq \Vert \partial_{\nu}(\Delta(u_{1}-u_{2})) \Vert_{L^2(\partial \Omega)} \Vert v \Vert_{L^2(\partial \Omega)} + \Vert \partial_{\nu} (u_{1}-u_{2}) \Vert_{L^2(\partial \Omega)} \Vert \Delta v \Vert_{L^2(\partial \Omega)} \\
& \leq C \big{(} \Vert \partial_{\nu}(\Delta(u_{1}-u_{2})) \Vert_{L^2(\partial \Omega)} \Vert v \Vert_{H^{1}(\Omega)}+ \Vert \partial_{\nu} (u_{1}-u_{2}) \Vert_{L^2(\partial \Omega)} \Vert \Delta v \Vert_{H^{1}(\Omega)}\big{)} \\
&\leq C\big{(}\Vert \partial_{\nu}(\Delta(u_{1}-u_{2})) \Vert_{L^2(\partial \Omega)} + \Vert \partial_{\nu} (u_{1}-u_{2}) \Vert_{L^2(\partial \Omega)}\big{)} \big{(}\Vert v \Vert_{H^{1}(\Omega)} +\Vert \Delta v \Vert_{H^{1}(\Omega)}\big{)} \\
&\leq C\big{(} \lVert \PD_{\nu}(\Delta(u_{1}-u_{2})),\PD_{\nu}(u_{1}-u_{2})\rVert_{H^{\frac{1}{2},\frac{5}{2}}(\PD \O)}\big{)}\big{(}\lVert v\rVert_{H^{1}(\O)}+\lVert \Delta v \rVert_{H^{1}(\Omega)}\big{)}.
\end{align*}
which again is
\begin{align*}
& = C \Vert (\mathcal{N}_{q_{1}}-\mathcal{N}_{q_{2}})(f,g)\Vert_{H^{\frac{5}{2},\frac{1}{2}}(\PD \O)} \big{(}\Vert v \Vert_{H^{1}(\Omega)} +\Vert \Delta v \Vert_{H^{1}(\Omega)}\big{)} \\
&\leq C \Vert\mathcal{N}_{q_{1}}-\mathcal{N}_{q_{2}} \Vert \Vert (f,g)\Vert_{H^{\frac{7}{2},\frac{3}{2}}(\PD \O)} (\Vert v \Vert_{H^{1}(\Omega)} +\Vert \Delta v \Vert_{H^{1}(\Omega)}) \\
&\leq C \lVert\mathcal{N}_{q_{1}}-\mathcal{N}_{q_{2}} \rVert\big{(}\rVert u_{2}\rVert_{H^{4}(\O)}+\lVert \Delta u_{2}\rVert_{H^{2}(\O)}\big{)} \big{(}\Vert v \Vert_{H^{1}(\Omega)} +\Vert \Delta v \Vert_{H^{1}(\Omega)}\big{)}.
\end{align*}
We have the following estimates for $\lVert v\rVert_{H^{1}(\O)}$ and $\lVert \Delta v\rVert_{H^{1}(\O)}$. In these estimates, we use that $\O\subset B(0,R)$ for $R>0$ fixed. Then $\lvert e^{\frac{\I x\cdot \zeta_{j}}{h}}\rvert\leq e^{\frac{2R}{h}}$, since $\lvert \zeta_{j}\rvert=2$ for $j=1,2$.
\begin{equation}
\begin{aligned}
\Vert v \Vert_{H^{1}(\Omega)} &\leq \lVert e^\frac{\I x\cdot \zeta_{1}}{h}(1+h r_{1})\rVert_{L^{2}(\Omega)}\NT\NT+\NT\NT\sum\limits^{n}_{k=1} \lVert h e^\frac{\I x\cdot\zeta_{1}}{h} \partial_{x_{k}}r_{1}\NT+\NT\frac{\I}{h}\zeta_{1k} e^\frac{\I x\cdot\zeta_{1}}{h}(1+hr_{1}) \rVert_{L^{2}(\O)} \\
& \leq  e^\frac{2R}{h}\lb 1 +h\Vert r_{1} \Vert_{H^{4}_{\mathrm{scl}}(\Omega)}\rb +\sum\limits_{k=1}^{n} \lb 2 e^\frac{2R}{h}\lb \Vert r_{1} \Vert_{H^{4}_{\mathrm{scl}}(\Omega)}+\frac{1}{h}\rb\rb \\
& \leq C e^\frac{2R}{h}(1+h^2) + \frac{C}{h} e^\frac{2R}{h}(1+h^2)  \leq \frac{C}{h} e^\frac{2R}{h}.
\end{aligned}
\notag
\end{equation}
From straightforward computations, we have the following: 
\begin{align*}
&\Delta v = h e^\frac{\I x\cdot \zeta_{1}}{h} \Delta r_{1} + 2\I e^\frac{\I x\cdot\zeta_{1}}{h} (\zeta_{1}\cdot\nabla r_{1})\\
&\partial _{x_{j}} (\Delta v)= h e^\frac{i x\cdot\zeta_{1}}{h} \partial_{x_{j}}(\Delta r_{1})+ i\zeta_{1j}e^\frac{\I x\cdot\zeta_{1}}{h}\Delta r_{1}+ 2\I e^\frac{\I x\cdot\zeta_{1}}{h} \partial_{x_{j}}(\zeta_{2}\cdot\nabla r_{1})\\
&\hspace{0.75in}-\frac{2}{h}\zeta_{1j} e^\frac{\I x\cdot\zeta_{1}}{h}(\zeta_{1}\cdot\nabla r_{1}).
\end{align*}
\begin{equation}
\begin{aligned}
\partial_{x_{k}} \partial_{x_{j}} (\Delta u_{2}) &=h e^\frac{\I x\cdot\zeta_{2}}{h} \partial_{x_{k}} \partial_{x_{j}} (\Delta r_{2})+ \I \zeta_{2k} e^\frac{\I x\cdot\zeta_{2}}{h} \partial_{x_{j}}(\Delta r_{2})+ \I \zeta_{2j} e^\frac{\I x\cdot\zeta_{2}}{h}\partial_{x_{k}}(\Delta r_{2}) \\
& -\frac{1}{h} \zeta_{2j} \zeta_{2k} e^\frac{\I x\cdot\zeta_{2}}{h} \Delta r_{2}+ 2\I e^\frac{\I x\cdot\zeta_{2}}{h}\partial_{x_{k}} \partial_{x_{j}}(\zeta_{2}\cdot\nabla r_{2})\\
&-\frac{2}{h} \zeta_{2k} e^\frac{\I x\cdot\zeta_{2}}{h} \partial_{x_{j}}(\zeta_{2}\cdot\nabla r_{2}) 
-\frac{2}{h} \zeta_{2j}e^\frac{\I x\cdot\zeta_{2}}{h} \partial_{x_{k}}(\zeta_{2}\cdot\nabla r_{2}) \\
&-\frac{2\I}{h^2} \zeta_{2j} \zeta_{2k} e^\frac{\I x\cdot\zeta_{2}}{h} (\zeta_{2}\cdot\nabla r_{2}).
\end{aligned}
\notag
\end{equation}
\begin{equation}
\begin{aligned}
\partial_{x_{m}} \partial_{x_{l}} \partial_{x_{k}} \partial_{x_{j}} u_{2} &= h e^\frac{\I x\cdot\zeta_{2}}{h} \partial_{x_{m}} \partial_{x_{l}} \partial_{x_{k}} \partial_{x_{j}} r_{2} + \I\zeta_{2m} e^\frac{\I x\cdot\zeta_{2}}{h}\partial_{x_{l}} \partial_{x_{k}} \partial_{x_{j}} r_{2} \\
& +\I \zeta_{2l} e^\frac{\I x\cdot\zeta_{2}}{h} \partial_{x_{m}} \partial_{x_{k}} \partial_{x_{j}} r_{2} 
 -\frac{1}{h} \zeta_{2l} \zeta_{2m} e^\frac{\I x\cdot\zeta_{2}}{h} \partial_{x_{k}} \partial_{x_{j}} r_{2} \\
 &-\frac{1}{h}\zeta_{2m} \zeta_{2k} e^\frac{\I x\cdot\zeta_{2}}{h} \partial_{x_{l}} \partial_{x_{j}} r_{2} 
 + \I \zeta_{2k} e^\frac{\I x\cdot\zeta_{2}}{h} \partial_{x_{m}} \partial_{x_{l}} \partial_{x_{j}} r_{2} \\
 &
 -\frac{1}{h} \zeta_{2l} \zeta_{2k} e^\frac{\I x\cdot\zeta_{2}}{h} \partial_{x_{m}} \partial_{x_{j}} r_{2} -\frac{\I}{h^2} \zeta_{2m} \zeta_{2l} \zeta_{2k} e^\frac{\I x\cdot\zeta_{2}}{h} \partial_{x_{j}} r_{2}\\
 &
 -\frac{1}{h} \zeta_{2m}\zeta_{2j} e^\frac{\I x\cdot\zeta_{2}}{h} \partial_{x_{l}} \partial_{x_{k}} r_{2} 
 +\I\zeta_{2j} e^\frac{\I x\cdot\zeta_{2}}{h} \partial_{x_{m}} \partial_{x_{l}} \partial_{x_{k}} r_{2} \\
 &-\frac{1}{h} \zeta_{2l} \zeta_{2j} e^\frac{\I x\cdot\zeta_{2}}{h} \partial_{x_{m}} \partial_{x_{k}} r_{2} -\frac{\I}{h^2} \zeta_{2m} \zeta_{2l} \zeta_{2j} e^\frac{\I x\cdot\zeta_{2}}{h} \partial_{x_{k}}r_{2} \\
 &-\frac{\I}{h^2} \zeta_{2m} \zeta_{2j} \zeta_{2k} e^\frac{\I x\cdot \zeta_{2}}{h} \partial_{x_{l}} r_{2} 
 -\frac{1}{h} \zeta_{2j} \zeta_{2k} e^\frac{\I x\cdot\zeta_{2}}{h} \partial_{x_{m}} \partial_{x_{l}} r_{2} \\
 &-\frac{\I}{h^2} \zeta_{2l} \zeta_{2j} \zeta_{2k} e^\frac{\I x\cdot\zeta_{2}}{h} \partial_{x_{m}}r_{2} 
 +\frac{1}{h^4} \zeta_{2m} \zeta_{2l} \zeta_{2j} \zeta_{2k} e^\frac{\I x\cdot\zeta_{2}}{h} (1+h r_{2}).
\end{aligned}
\notag
\end{equation}
Now using the above derivatives, it is straightforward to show the following:
\begin{align}
\label{H1 estimate of CGO}&\lVert \Delta v\rVert_{H^{1}(\O)}\leq \frac{Ce^{\frac{2R}{h}}}{h^{2}} \lVert r_{1}\rVert_{H^{4}_{\mathrm{scl}(\O)}}\leq \frac{C}{h}e^{\frac{2R}{h}}.\\
\label{H2 estimate of CGO}&\lVert \Delta u_{2} \rVert_{H^2(\Omega)} \leq \frac{C}{h}e^\frac{2R}{h} .\\
\label{H4 estimate of CGO}&\Vert u_{2} \Vert_{H^4(\Omega)} \leq \frac{C}{h^4}e^\frac{2R}{h} .
\end{align}
Therefore we have
\begin{equation}
\begin{aligned}
\rvert \int_{\partial \Omega} \partial_{\nu} (\Delta (u_{1}-u_{2}))\bar v \D S &+\int_{\partial \Omega} \partial_{\nu}(u_{1}-u_{2})\overline{(\Delta v)} \D S \lvert \\
& \leq C \Vert \mathcal{N}_{q_{1}}-\mathcal{N}_{q_{2}} \Vert(\frac{C}{h^4}e^\frac{2R}{h}+\frac{C}{h}e^\frac{2R}{h})(\frac{C}{h}e^\frac{2R}{h}+\frac{C}{h} e^\frac{2R}{h})\\
& \leq  \frac{C}{h^4} e^\frac{2R}{h}. \frac{C}{h} e^\frac{2R}{h} \Vert \mathcal{N}_{q_{1}}-\mathcal{N}_{q_{2}} \Vert\leq \frac{C}{h^5} e^\frac{4R}{h} \Vert \mathcal{N}_{q_{1}}-\mathcal{N}_{q_{2}} \Vert.
\end{aligned}
\notag
\end{equation}
Now using the fact that $\frac{1}{h} \leq e^\frac{R}{h}$, we obtain

\[
\vert \int_{\partial \Omega} \partial_{\nu} (\Delta (u_{1}-u_{2}))\bar v \D S +\int_{\partial \Omega} \partial_{\nu}(u_{1}-u_{2})\overline{(\Delta v)} \D S \vert \leq C e^\frac{9R}{h} \Vert \mathcal{N}_{q_{1}}-\mathcal{N}_{q_{2}} \Vert .
\]
Extending $q_{1},q_{2}$ to $\mathbb{R}^{n}$ by $0$ and using \eqref{Full data LHS} and \eqref{Full data estimate for I}, we get the estimate
\begin{equation}
\vert \wh{(q_{1}-q_{2})} (\xi)\vert \leq C (e^\frac{9R}{h}\Vert \mathcal{N}_{q_{1}}-\mathcal{N}_{q_{2}} \Vert+ h)
\notag
\end{equation}
Now
\begin{equation}
 \begin{aligned}
  \Vert q_{1}-q_{2} \Vert^{2}_{H^{-1}(\Omega)} &\leq \Vert q_{1}-q_{2} \Vert^{2}_{H^{-1}(\mathbb{R}^{n})} \\
   & = \int_{\vert \xi \vert \leq \rho} \frac{\vert \wh{(q_{1}-q_{2})}(\xi)\vert^{2}}{1+\vert \xi \vert^{2}} \D \xi + \int_{\vert \xi \vert > \rho} \frac{\vert \wh{(q_{1}-q_{2})}(\xi)\vert^{2}}{1+\vert \xi \vert^{2}} \D \xi,
 \end{aligned}
\notag
\end{equation}
for appropriate $\rho$  to be chosen later.\\
But
\begin{equation}
 \begin{aligned}
  \int_{\vert \xi \vert > \rho} \frac{\vert \wh{(q_{1}-q_{2})}(\xi)\vert^{2}}{1+\vert \xi \vert^{2}} \ d \xi & \leq 
\int_{\vert \xi \vert > \rho} \frac{\vert \wh{(q_{1}-q_{2})} (\xi) \vert^2}{1+\rho^2} \ d \xi \\
& \leq \frac{1}{\rho^2} \Vert q_{1}-q_{2} \Vert^{2}_{L^2(\mathbb{R}^{n})} \leq \frac{C}{\rho^{2}}
 \end{aligned}
\notag
\end{equation}
and
\begin{equation}
 \begin{aligned}
  \int_{\vert \xi \vert \leq \rho} \frac{\vert \wh{(q_{1}-q_{2})}(\xi)\vert^{2}}{1+\vert \xi \vert^{2}} \D \xi & \leq 
C \int_{\vert \xi \vert \leq \rho} \frac{(e^\frac{9R}{h} \Vert \mathcal{N}_{q_{1}} -\mathcal{N}_{q_{2}} \Vert+h)^{2}}{1+\vert \xi \vert^{2}} \D \xi  \\
   & \leq C (e^\frac{18R}{h} \Vert \mathcal{N}_{q_{1}}-\mathcal{N}_{q_{2}} \Vert^{2}+ h^{2}) \int_{\vert \xi \vert \leq \rho} \frac{d \xi}{1+\vert \xi \vert^{2}} \\
   & \leq C \rho^{n} (e^\frac{18R}{h} \Vert \mathcal{N}_{q_{1}}-\mathcal{N}_{q_{2}} \Vert^{2}+h^{2})
 \end{aligned}
\notag
\end{equation}
Therefore 
\begin{equation}
 \Vert q_{1}-q_{2} \Vert^{2}_{H^{-1}(\Omega)} \leq C \rho^{n} e^\frac{18R}{h} \Vert \mathcal{N}_{q_{1}}-\mathcal{N}_{q_{2}} \Vert^{2}+C \rho^{n} h^{2} + \frac{C}{\rho^{2}}\ .
 \notag
\end{equation}
Now assume that  $\Vert \mathcal{N}_{q_{1}}-\mathcal{N}_{q_{2}} \Vert < \delta = e^-{\frac{20R}{h_{0}}}$. 
Then we choose $\rho=\{\frac{1}{20R} \vert \ln \ \Vert \mathcal{N}_{q_{1}}-\mathcal{N}_{q_{2}} \Vert \vert \}^{\frac{2}{n+2}}.$ 
Further let $h=\frac{1}{\rho^{\frac{n+2}{2}}}$. With this choice of $h$, we show that $h<h_{0}$ and $1-h^{2} |\xi|^{2}/4>0$ for $|\xi|<\rho$.  The fact that $h<h_{0}$ follows from these inequalities: 
 \begin{align*}
  \Vert \mathcal{N}_{q_{1}}-\mathcal{N}_{q_{2}}  \Vert &< e^{-\frac{20R}{h_{0}}}\ll 1 \\
  \Rightarrow \ln \Vert \mathcal{N}_{q_{1}}-\mathcal{N}_{q_{2}}  \Vert &< -\frac{20R}{h_{0}}\\
  \Rightarrow \vert \ln \Vert \mathcal{N}_{q_{1}}-\mathcal{N}_{q_{2}}  \Vert \vert &> \frac{20R}{h_{0}} \\
  \Rightarrow \frac{1}{20R} \vert \ln \Vert \mathcal{N}_{q_{1}}-\mathcal{N}_{q_{2}}  \Vert \vert &> \frac{1}{h_{0}} \\
  \Rightarrow h &< h_{0}.
 \end{align*}
Now we show that $1-h^{2}\frac{\vert \xi \vert^{2}}{4} >0 $ for $|\xi|<\rho$. 
We have that 
\[\rho^{n}=\{\frac{1}{20R} \vert \ln \Vert  \mathcal{N}_{q_{1}}-\mathcal{N}_{q_{2}} \Vert \vert \}^{\frac{2n}{n+2}}. \] 
Since $\frac{2n}{n+2}>1 $ and  $\frac{1}{20R} \vert \ln \Vert  \mathcal{N}_{q_{1}}-\mathcal{N}_{q_{2}} \Vert \vert >1 $, we have  that $\rho^{n} >1$.\\
Hence
$$h^{2}\frac{\vert \xi \vert^{2}}{4}<h^{2} \frac{\rho^{2}}{4} = \frac{1}{4 \rho^{n}} <1 $$
and so $1-h^{2}\frac{\vert \xi \vert^{2}}{4}>0 .$
\\
\\
Therefore
\begin{equation}
\begin{aligned}
\Vert q_{1}-q_{2} \Vert^{2}_{H^{-1}(\Omega)} &\leq \frac{C}{h^\frac{2n}{n+2}} e^\frac{18R}{h} \Vert \mathcal{N}_{q_{1}}-\mathcal{N}_{q_{2}}  \Vert^{2}
+ C h^\frac{4}{n+2} \\
& \leq C e^\frac{20R}{h} \Vert \mathcal{N}_{q_{1}}-\mathcal{N}_{q_{2}}   \Vert^{2} + C h^\frac{4}{n+2}.
\end{aligned}
 \notag
\end{equation}
and since $\frac{1}{h}=\frac{1}{20R} \vert \ln \ \Vert \mathcal{N}_{q_{1}}-\mathcal{N}_{q_{2}} \Vert\vert $,    
we then  obtain the estimate 
\begin{equation}
\Vert q_{1}-q_{2}  \Vert^{2}_{H^{-1}(\Omega)}\leq C(\Vert \mathcal{N}_{q_{1}}-\mathcal{N}_{q_{2}}  \Vert+ \vert \ln \Vert \mathcal{N}_{q_{1}}-\mathcal{N}_{q_{2}}   \Vert     \vert^{-\frac{4}{n+2}}),
 \notag
\end{equation}
when $\Vert \mathcal{N}_{q_{1}}-\mathcal{N}_{q_{2}} \Vert < \delta = e^-{\frac{20R}{h_{0}}}$. 

The case when $\Vert \mathcal{N}_{q_{1}}-\mathcal{N}_{q_{2}} \Vert \geq \delta$ follows from the continuous inclusions $$L^{\infty}(\Omega) \hookrightarrow 
L^{2}(\Omega) \hookrightarrow H^{-1}(\Omega).$$ In other words, we have
\begin{equation}
\begin{aligned}
 \Vert q_{1}-q_{2} \Vert^{2}_{H^{-1}(\Omega)} \leq C \Vert q_{1}-q_{2} \Vert^{2}_{L^{\infty}(\Omega)} \leq \frac{4CM^{2}}{\delta} \delta
 \leq \frac{4CM^{2}}{\delta} \Vert \mathcal{N}_{q_{1}}-\mathcal{N}_{q_{2}} \Vert
\end{aligned}
\notag
\end{equation}
and hence the desired estimate follows.
This concludes the proof.
\end{proof}
\section{Stability estimate for slightly more than half data}

Here we prove stability estimates for the partial data case. In the appendix, we include a proof of the identifiability in this case using linear Carleman weights. We would be using
a few estimates derived therein in this section.

\begin{proof} [Proof of Theorem \ref{Thm: Partial data result}]
We begin with the following identity as at the beginning of Theorem \ref{Full-Main Result} and rewrite it as 
\begin{align}
\notag\int\limits_{\O} \lb q_{2}-q_{1}\rb u_{2}\overline{v}\D x&=\int\limits_{\PD \O}\PD_{\nu}(\triangle (u_{1}-u_{2}))\overline{v}\: \D S+\int\limits_{\PD \O} \PD_{\nu} (u_{1}-u_{2}) (\overline{\triangle v})\: \D S\\
\label{Terms on more than half boundary} &= \int\limits_{\PD \O_{-,\ve}}\PD_{\nu}(\triangle (u_{1}-u_{2}))\overline{v}\: \D S  +\int\limits_{\PD \O_{-,\ve}} \PD_{\nu} (u_{1}-u_{2}) (\overline{\triangle v})\: \D S\\
\label{Terms on less than half boundary}&+ \int\limits_{\PD \O_{+,\ve}}\PD_{\nu}(\triangle (u_{1}-u_{2}))\overline{v}\: \D S  +\int\limits_{\PD \O_{+,\ve}} \PD_{\nu} (u_{1}-u_{2}) (\overline{\triangle v})\: \D S.
\end{align}
We estimate the terms in \eqref{Terms on more than half boundary}. Proceeding as with the full data case, we have 
\begin{equation}
\begin{aligned}
& \lvert \int_{\partial \Omega_{-,\epsilon}} \partial_{\nu} (\Delta (u_{1}-u_{2}))\bar v\: \D S +\int_{\partial \Omega_{-,\epsilon}} \partial_{\nu}(u_{1}-u_{2})\overline{(\Delta v)} \ \D S \rvert \\
& \leq C\lb\Vert \partial_{\nu}(\Delta(u_{1}-u_{2})) \Vert_{H^\frac{1}{2}(\partial \Omega_{-,\epsilon})} + \Vert \partial_{\nu} (u_{1}-u_{2}) \Vert_{H^\frac{5}{2}(\partial \Omega_{-,\epsilon})}\rb\\
&\hspace{0.4in} \lb\Vert v \Vert_{H^{1}(\Omega)} +\Vert \Delta v \Vert_{H^{1}(\Omega)}\rb \\
& = C \Vert (\wt {\mathcal{N}}_{q_{1}}-\wt{\mathcal{N}}_{q_{2}})(f,g)\Vert_{H^{\frac{5}{2},\frac{1}{2}}(\partial \Omega_{-,\epsilon})} (\Vert v \Vert_{H^{1}(\Omega)} +\Vert \Delta v \Vert_{H^{1}(\Omega)}) \\
& \leq C \Vert \wt{\mathcal{N}}_{q_{1}}-\wt{\mathcal{N}}_{q_{2}} \Vert\Vert (f,g)\Vert_{H^{\frac{7}{2},\frac{3}{2}}(\partial \Omega)} (\Vert v \Vert_{H^{1}(\Omega)} +\Vert \Delta v \Vert_{H^{1}(\Omega)}) \\
& \leq C \Vert \wt{\mathcal{N}}_{q_{1}}-\wt{\mathcal{N}}_{q_{2}} \Vert (\Vert u_{2} \Vert_{H^4(\Omega)}+ \Vert \Delta u_{2} \Vert_{H^{2}(\Omega)}) (\Vert v \Vert_{H^{1}(\Omega)} +\Vert \Delta v \Vert_{H^{1}(\Omega)}) \\
& \leq C e^\frac{9R}{h} \Vert \wt{\mathcal{N}}_{q_{1}}-\wt{\mathcal{N}}_{q_{2}} \Vert.
\end{aligned}
\notag
\end{equation}
Now we estimate the terms in \eqref{Terms on less than half boundary}. 

We first have 
\begin{equation}
\begin{aligned}
\lvert \int_{\partial \Omega_{+,\epsilon}} \NT\NT\partial_{\nu}(\Delta(u_{1}-u_{2}))\bar v \ \D S \rvert &= \vert \int_{\partial \Omega_{+,\epsilon}} e^{-\frac{x\cdot \A}{h}} \partial_{\nu}(\Delta(u_{1}-u_{2})) e^{\frac{x\cdot \A}{h}} \bar v \ \D S \vert \\
&\leq \Vert e^{-\frac{x\cdot \A}{h}} \partial_{\nu} (\Delta(u_{1}-u_{2})) \Vert_{L^2(\partial \Omega_{+,\epsilon})} \Vert e^{\frac{x\cdot \A}{h}} \bar v \Vert_{L^2(\partial \Omega_{+,\epsilon})} \\
& \leq C \Vert e^{-\frac{x\cdot \A}{h}} \partial_{\nu} (\Delta(u_{1}-u_{2})) \Vert_{L^2(\partial \Omega_{+,\epsilon})}
\end{aligned}
\notag
\end{equation}
and
\begin{equation}
\begin{aligned}
\vert \int_{\partial \Omega_{+,\epsilon}} \NT\NT\partial_{\nu} (u_{1}-u_{2}) \overline{(\Delta v)} \ \D S \vert &= \vert \int_{\partial \Omega_{+,\epsilon}} e^{-\frac{x\cdot \A}{h}} \partial_{\nu} (u_{1}-u_{2}) e^{\frac{x\cdot \A}{h}} \overline{(\Delta v)} \ \D S \vert \\
& \leq \Vert e^{-\frac{x\cdot \A}{h}} \partial_{\nu} (u_{1}-u_{2}) \Vert_{L^2(\partial \Omega_{+,\epsilon})} \Vert e^{\frac{x\cdot \A}{h}} \overline{(\Delta v)} \Vert_{L^2(\partial \Omega_{+,\epsilon})} \\
& \leq C  \Vert e^{-\frac{x\cdot \A}{h}} \partial_{\nu} (u_{1}-u_{2}) \Vert_{L^2(\partial \Omega_{+,\epsilon})}.
\end{aligned}
\notag
\end{equation}
By the boundary Carleman estimate, we have for $\ve>0$,
\begin{equation}
\begin{aligned}
& h^{\frac{3}{2}}\Vert \sqrt{\A\cdot\nu} \ e^{-\frac{x\cdot\alpha}{h}}\ \partial_{\nu}(-h^2 \Delta u) \Vert_{L^2(\partial \Omega_{+})}
\leq C( \Vert e^{-\frac{x\cdot\alpha}{h}} (h^4 \mathcal{\Bc}_{q_{1}})u \Vert_{L^2(\Omega)}+ \\
 & h^\frac{3}{2}  \Vert \sqrt{-\alpha\cdot\nu}\  e^{-\frac{x\cdot\alpha}{h}} \partial_{\nu}(-h^2 \Delta u) \Vert_{L^2(\partial \Omega_{-})}
+ h^\frac{5}{2} \Vert \sqrt{-(\alpha\cdot\nu)} \ e^{-\frac{x\cdot\alpha}{h}} \partial_{\nu}u \Vert_{L^2(\partial \Omega_{-})}) \\
\end{aligned}
\notag
\end{equation}
We then have
\begin{align*}
\Vert \sqrt{\alpha\cdot\nu}\ e^{-\frac{x\cdot\alpha}{h}} \partial_{\nu}(\Delta u) \Vert_{L^2(\partial \Omega_{+})} \NT\NT
 \leq & C \Big{(}\NT \sqrt{h} \Vert e^{-\frac{x\cdot\alpha}{h}}\Bc_{q_{1}}u \Vert_{L^2(\Omega)}\\
 &+ \NT \frac{1}{h} \Vert \sqrt{-\alpha\cdot\nu} \ e^{-\frac{x\cdot \A}{h}} \partial_{\nu}u \Vert_{L^2(\partial \Omega_{-})}\\
 & +\Vert \sqrt{-\alpha\cdot\nu}\  e^{-\frac{x\cdot\alpha}{h}} \partial_{\nu}(\Delta u)  \Vert_{L^2(\partial \Omega_{-})}\Big{)}.
\end{align*}
Since on $\PD \O_{+,\ve}$, $\A\cdot \nu>\ve$, we have
\begin{align*}
 \Vert e^{-\frac{x\cdot\alpha}{h}} \partial_{\nu}(\Delta u) \Vert_{L^2(\partial \Omega_{+,\epsilon})} \leq & \frac{C}{\sqrt{\epsilon}} \Big{(}\sqrt{h}\Vert e^{-\frac{x\cdot\alpha}{h}}\Bc_{q_{1}}u \Vert_{L^2(\Omega)} +\frac{1}{h} \Vert \sqrt{-(\A\cdot \nu)} \ e^{-\frac{x\cdot \A}{h}} \partial_{\nu}u \Vert_{L^2(\partial \Omega_{-})}\\
&+\Vert \sqrt{-(\A\cdot \nu)}\  e^{-\frac{x\cdot \A}{h}} \partial_{\nu}(\Delta u)  \Vert_{L^2(\partial \Omega_{-})}\Big{)}. \\
& \leq \frac{C}{\sqrt{\ve}} \Big{(}\sqrt{h}\Vert e^{-\frac{x\cdot \A}{h}}\Bc_{q_{1}}u \Vert_{L^2(\Omega)}  \\
&+\sqrt{-\inf_{\partial \Omega_{-}} (\A\cdot \nu)} \Vert e^{-\frac{x\cdot \A}{h}} \partial_{\nu}(\Delta u) \Vert_{L^2(\partial \Omega_{-,\epsilon})} \\
&  + \sqrt{- \inf_{\partial \Omega_{-}} (\A\cdot \nu)} \Vert e^{-\frac{x\cdot\alpha}{h}} \partial_{\nu} u \Vert_{L^2(\partial \Omega_{-,\epsilon})}\Big{)}.
\end{align*}
Therefore, 
\begin{align*}
  \vert \int_{\partial \Omega_{+,\epsilon}} \partial_{\nu}(\Delta(u_{1}-u_{2})) \bar{v} \ \D S \vert &\leq \frac{C}{\sqrt{\epsilon}} \sqrt{-\displaystyle \inf_{\partial \Omega_{-}} (\A\cdot \nu)} \ \Vert e^{-\frac{x\cdot \A}{h}} \partial_{\nu}(\Delta(u_{1}-u_{2})) \Vert_{L^{2}(\partial \Omega_{-,\epsilon})}\\
  &+C \sqrt{\frac{h}{\epsilon}} \Vert e^{-\frac{x\cdot \A}{h}} (\mathcal{B}_{q_{1}})(u_{1}-u_{2}) \Vert_{L^{2}(\Omega)} 
    \\
   &+\frac{C}{h \sqrt{\epsilon}} \sqrt{-\inf_{\partial \Omega_{-}} (\A\cdot \nu)} \ \Vert e^{-\frac{x\cdot \A}{h}} \partial_{\nu}(u_{1}-u_{2})  \Vert_{L^{2}(\partial \Omega_{-,\epsilon})}.
    \end{align*}
    Now we have 
    \[
    \Vert e^{-\frac{x\cdot \A}{h}} \mathcal{B}_{q_{1}}(u_{1}-u_{2}) \Vert_{L^{2}(\Omega)} =\lVert e^{-\frac{x\cdot \A}{h}} (q_{1}-q_{2})u_{2}\rVert_{L^{2}(\O)}.
    \]
   Using the CGO solutions from Proposition \eqref{CGO solutions},
    \[
    u_{2}(x,\zeta_{2};h)=e^{ \frac{\I x\cdot \zeta_{2}}{h}}(1+hr_{2}(x,\zeta_{2};h)) \mbox{ where } 
    \]
    \[
    \zeta_{2}=-\frac{h \xi}{2} +\sqrt{1-h^2 \frac{\vert \xi \vert ^2}{4}}\beta -\I \alpha,
    \]
    we have 
  
    \[
    \lVert e^{-\frac{x\cdot \A}{h}} (q_{1}-q_{2})u_{2}\rVert_{L^{2}(\O)}\leq C.
    \]
    Therefore 
    \begin{align*}
    \lvert \int_{\partial \Omega_{+,\epsilon}} \NT\NT\NT\NT\NT\NT\partial_{\nu}(\Delta(u_{1}-u_{2})) \bar{v} \ \D S \rvert&\leq C \sqrt{h}+C e^{\frac{R}{h}} \Vert \partial_{\nu}(\Delta(u_{1}-u_{2})) \Vert_{L^{2}(\partial \Omega_{-,\epsilon})}\\
    &+\frac{C}{h} e^{\frac{R}{h}} \Vert \partial_{\nu}(u_{1}-u_{2}) \Vert_{L^{2}(\partial \Omega_{-,\epsilon})}.\\
  & \leq C \Big{(}\sqrt{h}+\frac{e^{\frac{R}{h}}}{h}\Big{(}\Vert \partial_{\nu}(\Delta(u_{1}-u_{2})) \Vert_{L^{2}(\partial \Omega_{-,\epsilon})}\\
  &+\Vert \partial_{\nu}(u_{1}-u_{2}) \Vert_{L^{2}(\partial \Omega_{-,\epsilon})}\Big{)}\Big{)}. \\
  & \leq C \sqrt{h}\NT+\NT\frac{C}{h}e^{\frac{R}{h}} \Vert \wt{\mathcal{N}}_{q_{1}} - \wt{\mathcal{N}}_{q_{2}} \Vert (\Vert u_{2} \Vert_{H^{4}(\Omega)}\NT+\NT\Vert \Delta u_{2} \Vert_{H^{2}(\Omega)}) \\
  \intertext{and using the estimates \eqref{H4 estimate of CGO} and \eqref{H2 estimate of CGO}, it follows that this is}
  & \leq C \sqrt{h}+\frac{C}{h^{5}}e^{\frac{3R}{h}} \Vert \wt{\mathcal{N}}_{q_{1}}-\wt{\mathcal{N}}_{q_{2}} \Vert\\
  &\leq C \sqrt{h}+C e^{\frac{8R}{h}} \Vert \wt{\mathcal{N}}_{q_{1}}-\wt{\mathcal{N}}_{q_{2}} \Vert,
 \end{align*}
 where the constant $C$ now depends upon $\ve$.\\
From the boundary Carleman estimate, we also have
\begin{equation}
\begin{aligned} 
h^{\frac{5}{2}}\Vert \sqrt{\A\cdot \nu}\ e^{-\frac{x\cdot \A}{h}} \partial_{\nu}u \Vert_{L^2(\partial \Omega_{+,\ve})} \leq & C (\Vert    e^{-\frac{x\cdot \A}{h}} (h^4 \mathcal{B}_{q_{1}})u \Vert_{L^2(\Omega)}\\
&+ h^\frac{3}{2}  \Vert \sqrt{-\A\cdot \nu}\  e^{-\frac{x\cdot \A}{h}} \partial_{\nu}(-h^2 \Delta u) \Vert_{L^2(\partial \Omega_{-},\ve)} \\ 
& + h^\frac{5}{2} \Vert \sqrt{-\A\cdot \nu} \ e^{-\frac{x\cdot \A}{h}} \partial_{\nu}u \Vert_{L^2(\partial \Omega_{-},\ve)}) \\
\end{aligned}
\notag
\end{equation}
and therefore
\begin{equation}
 \begin{aligned}
  \Vert \sqrt{\A\cdot \nu}\ e^{-\frac{x\cdot \A}{h}} \partial_{\nu}u \Vert_{L^2(\partial \Omega_{+,\ve})} \leq 
  & C\Big{(}h^{\frac{3}{2}} \Vert e^{-\frac{x\cdot \A}{h}} \mathcal{B}_{q_{1}}u  \Vert_{L^{2}(\Omega)}\\&+ h \Vert \sqrt{-\A\cdot \nu}\  e^{-\frac{x\cdot \A}{h}} \partial_{\nu}( \Delta u) \Vert_{L^2(\partial \Omega_{-},\ve)} \\ 
  &  + \Vert \sqrt{-(\A\cdot \nu)} \ e^{-\frac{x\cdot \A}{h}} \partial_{\nu}u \Vert_{L^2(\partial \Omega_{-},\ve)}\Big{)}.
 \end{aligned}
\notag
\end{equation}
We then have
\begin{equation}
\begin{aligned}
\Vert e^{-\frac{x\cdot \A}{h}} \partial_{\nu}u \Vert_{L^2(\partial \Omega_{+,\epsilon})} &\leq \frac{C}{\sqrt{\epsilon}} (h^\frac{3}{2} \Vert e^{-\frac{x\cdot \A}{h}}\mathcal{B}_{q_{1}}u \Vert_{L^2(\Omega)} \\& +h \Vert \sqrt{-(\A\cdot \nu)} \  e^{-\frac{x\cdot \A}{h}} \partial_{\nu}(\Delta u)  \Vert_{L^2(\partial \Omega_{-},\ve)} \\ 
& + \Vert \sqrt{-(\A\cdot \nu)} \ e^{-\frac{x\cdot \A}{h}} \partial_{\nu}u \Vert_{L^2(\partial \Omega_{-},\ve)}) \\
& \leq \frac{C}{\sqrt{\epsilon}} h^\frac{3}{2} \Vert e^{-\frac{x\cdot \A}{h}}(\mathcal{B}_{q_{1}})u \Vert_{L^2(\Omega)}  \\& +\frac{hC}{\sqrt{\epsilon}} \sqrt{- \displaystyle \inf_{\partial \Omega_{-}} (\A\cdot \nu)} \Vert e^{-\frac{x\cdot \A}{h}} \partial_{\nu}(\Delta u) \Vert_{L^2(\partial \Omega_{-},\epsilon)} \\
&  + \frac{C}{\sqrt{\epsilon}} \sqrt{- \inf_{\partial\Omega_{-}} \A\cdot \nu} \Vert e^{-\frac{x\cdot \A}{h}} \partial_{\nu} u \Vert_{L^2(\partial \Omega_{-},\epsilon)}.
\end{aligned}
\notag
\end{equation}
Similar to the previous estimate, we have
\begin{equation}
 \begin{aligned}
\vert \int_{\partial \Omega_{+,\epsilon}} \NT\NT\NT\NT\NT\NT\partial_{\nu}(u_{1}-u_{2})\overline{(\Delta v)}\ \D S \vert  
& \leq \frac{C}{\sqrt{\epsilon}}h^{\frac{3}{2}} \Vert e^{-\frac{x\cdot \A}{h}} (\mathcal{B}_{q_{1}})(u_{1}-u_{2}) \Vert_{L^{2}(\Omega)} \\
&+\frac{h C}{\sqrt{\epsilon}} \sqrt{-\displaystyle \inf_{\partial \Omega_{-}} \A\cdot \nu} \Vert e^{-\frac{x\cdot \A}{h}} \partial_{\nu}(\Delta(u_{1}-u_{2})) \Vert_{L^{2}(\partial \Omega_{-,\epsilon})} \\
&+\frac{C}{\sqrt{\epsilon}} \sqrt{-\displaystyle \inf_{\partial \Omega_{-}} (\A\cdot \nu)} \ \Vert e^{-\frac{x\cdot \A}{h}} \partial_{\nu}(u_{1}-u_{2}) \Vert_{L^{2}(\partial \Omega_{-,\epsilon})} \\
& \leq C h^{\frac{3}{2}} + \frac{C}{h^{4}}e^{\frac{3R}{h}} \Vert \wt{\mathcal{N}}_{q_{1}}-\wt{\mathcal{N}}_{q_{2}} \Vert\\& 
  \leq C h^{\frac{3}{2}} + C e^{\frac{8R}{h}} \Vert \wt{\mathcal{N}}_{q_{1}}-\wt{\mathcal{N}}_{q_{2}} \Vert,
 \end{aligned}
\notag
\end{equation}
where the constant $C$ again depends upon $\ve$.\\
Therefore using the estimates obtained above, we have
 \begin{align} \label{Partial data partial FT estimate}
  \notag\vert \int_{\Omega} e^{-i x\cdot\xi} (q_{2}-q_{1}) \ dx \vert  & \leq \vert\NT\NT \int_{\partial \Omega_{+,\epsilon}}\NT\NT\NT\NT\NT \partial_{\nu}(\Delta(u_{1}-u_{2}))\bar{v} \ \D S \vert + \vert \NT\NT\int_{\partial \Omega_{+,\epsilon}}\NT\NT\NT\NT \partial_{\nu}(u_{1}-u_{2})\overline{(\Delta v)}\ \D S \vert \\
 \notag &  + \vert\NT\NT \int_{\partial \Omega_{-,\epsilon}}\NT\NT\NT\NT \partial_{\nu}(\Delta(u_{1}-u_{2})) \bar{v} \ \D S + \int_{\partial \Omega_{-,\epsilon}}\NT\NT\NT\NT \partial_{\nu}(u_{1}-u_{2}) \overline{(\Delta v)}\ \D S \vert \\
  \notag &  + \vert \int_{\Omega} (q_{2}-q_{1}) e^{-i x.\xi} (h \bar{r}_{1}+h r_{2}+h^{2} \bar{r}_{1} r_{2}) \ dx \vert \\
  \notag & \leq C \sqrt{h}+C e^{\frac{8R}{h}} \Vert \wt{\mathcal{N}}_{q_{1}}-\wt{\mathcal{N}}_{q_{2}} \Vert+C h^{\frac{3}{2}}\\\notag & +C e^{\frac{8R}{h}} \Vert \wt{\mathcal{N}}_{q_{1}}-\wt{\mathcal{N}}_{q_{2}} \Vert \\ 
  \notag &  +C e^{\frac{9R}{h}} \Vert \wt{\mathcal{N}}_{q_{1}}-\wt{\mathcal{N}}_{q_{2}} \Vert+ C h \\
  & \leq C (\sqrt{h} + e^{\frac{9R}{h}} \Vert \wt{\mathcal{N}}_{q_{1}}-\wt{\mathcal{N}}_{q_{2}}  \Vert).
 \end{align}
 The argument that now follows is similar to the one in \cite{Heck-Wang-StabilityPaper}. We will apply Vessella's result given in Theorem \ref{Vessella Result} for the following set up. We take $D$ to be the ball $B(0,2)$ and $E =V\cap B(0,1)$ where $V$ is a suitable small open cone centered at $0$ obtained by perturbing the vector $\A$ slightly and recalling that $\xi$ is perpendicular to $\A$.
 Note that the above estimate is valid for all $\xi \in V $ such that $\vert \xi \vert < \frac{2}{h} $.

 Now let $q=q_{1}-q_{2}$ extended to $\Rb^{n}$ as $0$ outside $\O$ and for a fixed $\rho \in (0,\frac{2}{h})$, let $f(\xi)=\wh{q}(\rho \xi)$. Then $f$ is analytic in $B(0,2)$ and 
 \[
 \vert D^{\alpha} f(\xi) \vert \leq \Vert q \Vert_{L^{1}(\Omega)} \frac{\rho^{\vert \alpha \vert}}{(\text{diam}(\Omega)^{-1})^{\vert \alpha \vert}} \leq 2M \lvert \O\rvert \alpha ! \frac{e^{n \rho}}{(\text{diam}(\Omega)^{-1})^{\vert \alpha \vert}}.
 \]
 Taking $C$ and $\lambda$ in Vessella's result to be $C=2M\lvert \O\rvert e^{n\rho}$ and $\lambda=\text{diam}(\Omega)^{-1}$, we get that there exists a constant $\g_{1}\in (0,1)$ such that 
 \[
|f(\xi)|\leq C^{1-\g_{1}(|E|/|D|)}\lb \sup\limits_{E} |f(\xi)|\rb^{\g_{1}(|E|/|D|)}, \mbox{ for all } \xi\in B(0,1).
\]
Letting $\theta=\g_{1}|E|/|D|$, we have that for all $|\xi|<\rho$, 
\Beq\label{FT estimate on the unit ball}
|\wh{q}(\xi)|\leq C^{1-\theta}\lb \sup\limits_{V\cap B(0,\rho)} |\wh{q}(\xi/\rho)|\rb^{\theta}.
\Eeq
Note that the constant $\theta$ is independent of $\rho$ and $h$.
We have 
 \begin{align*}
 \Vert q \Vert_{H^{-1}(\mathbb{R}^{n})}^{\frac{2}{\theta}} & = \lb \int_{\vert \xi \vert < \rho} \frac{\vert \wh{q} (\xi)\vert^{2}}{1+\vert \xi \vert^{2}} \D \xi+\int_{\vert \xi \vert\geq \rho } \frac{\vert \wh{q}(\xi) \vert^{2}}{1+\vert \xi \vert^{2}} \D \xi \rb^{\frac{1}{\theta}} \\
& \leq C \lb\rho^{\frac{n}{\theta}} \Vert \wh{q} \Vert_{L^{\infty}(B(0,\rho))}^{\frac{2}{\theta}} +\frac{1}{\rho^{\frac{2}{\theta}}} \rb.\\
\intertext{The estimate of the second term on the right hand side above is obtained from Plancherel identity. Now from \eqref{FT estimate on the unit ball}, it follows that the left hand side is}
& \leq C \lb\rho^{\frac{n}{\theta}} e^{2n \rho \frac{1-\theta}{\theta}} \Vert \wh{q} (\xi) \Vert^{2}_{L^{\infty}(V \cap B(0,\rho))}+\frac{1}{\rho^{\frac{2}{\theta}}}\rb\\
& \leq C \lb\rho^{\frac{n}{\theta}} e^{2n \rho \frac{1-\theta}{\theta}} e^{\frac{18 R}{h}} \Vert \wt{\mathcal{N}}_{q_{1}}-\wt{\mathcal{N}}_{q_{2}} \Vert^{2}+\rho^{\frac{n}{\theta}} e^{2n \rho \frac{1-\theta}{\theta}} h +\frac{1}{\rho^{\frac{2}{\theta}}}\rb.
 \end{align*}

Let $L=\frac{3n+2-2n \theta}{\theta} $ and $\delta=e^{-{e^{K/{h_{0}^{\frac{1}{L}}}}}},$ where $K= \frac{2n+2}{\theta}+4n \frac{1-\theta}{\theta}+18 R.$\\
Let $\Vert \wt{\mathcal{N}}_{q_{1}}-\wt{\mathcal{N}}_{q_{2}} \Vert < \delta $.\\
Then choose $\rho=\frac{1}{K} \ln \vert \ln \Vert \wt{\mathcal{N}}_{q_{1}}-\wt{\mathcal{N}}_{q_{2}} \Vert \vert$ and 
\[
h=\frac{1}{\rho^{\frac{n+2}{\theta}}e^{\frac{2n\rho(1-\theta)}{\theta}}}.
\]
\textit{Claim 1:} $\rho < \frac{2}{h}$. We have 
$\rho h=\frac{\rho}{\rho^{\frac{n+2}{\theta}} e^{\frac{2n \rho (1-\theta)}{\theta}}} = \frac{1}{\rho^{\frac{n+2}{\theta}-1} e^{\frac{2n \rho (1-\theta)}{\theta}}} \leq \frac{1}{\rho^{\frac{n+2}{\theta}-1}}$.\\
Now since $n \geq 3$ we have $\frac{n+2}{\theta}-1 > 4$, and hence $\rho^{\frac{n+2}{\theta}-1}> \rho^4 $.
Therefore, $\rho h < \frac{1}{\rho^4} < 2 \ \ (\text{since} \ \rho > 1 ) $.

\textit{Claim 2:} $h < h_{0}$. We have 
\begin{equation}
 \begin{aligned}
  \Vert \wt{\mathcal{N}}_{q_{1}}-\wt{\mathcal{N}}_{q_{2}}  \Vert &< e^{-{e^{K/{h_{0}^{\frac{1}{L}}}}}}\ll 1 \\
  \Rightarrow \ln \Vert \wt{\mathcal{N}}_{q_{1}}-\wt{\mathcal{N}}_{q_{2}}  \Vert &< -{e^{K/{h_{0}^{\frac{1}{L}}}}} \\
  \Rightarrow \vert \ln \Vert \wt{\mathcal{N}}_{q_{1}}-\wt{\mathcal{N}}_{q_{2}}  \Vert \vert &>  e^{K/{h_{0}^{\frac{1}{L}}}}\\
  \Rightarrow \frac{1}{K} \ln \vert \ln \Vert \wt{\mathcal{N}}_{q_{1}}-\wt{\mathcal{N}}_{q_{2}}  \Vert \vert &> \frac{1}{h_{0}^{\frac{1}{L}}} \\
  \Rightarrow \rho^{L} &> \frac{1}{h_{0}} \\
  \Rightarrow \rho^{\frac{3n+2-2n \theta}{\theta}} &> \frac{1}{h_{0}} \\
  \Rightarrow \frac{1}{\rho^{\frac{n+2}{\theta}} \rho^{\frac{2n(1-\theta)}{\theta}}} &< h_{0} \\
  \Rightarrow h=\frac{1}{\rho^{\frac{n+2}{\theta}}e^{\frac{2n\rho(1-\theta)}{\theta}}} \leq \frac{1}{\rho^{\frac{n+2}{\theta}} \rho^{\frac{2n(1-\theta)}{\theta}}} &< h_{0}
 \end{aligned}
\notag
\end{equation}

\textit{Claim 3:} $1-h^{2}\frac{\vert \xi \vert^{2}}{4} >0$. This is because 
$h^{2} \frac{\vert \xi \vert^{2}}{4} \leq h^{2} \frac{\rho^{2}}{4}= \frac{\rho^{2}}{4 \rho^{\frac{2(n+2)}{\theta}} e^{4n \rho \frac{(1-\theta)}{\theta}}} =\frac{\rho^{2-\frac{2n+4}{\theta}}}{4 e^{4n \rho \frac{1-\theta}{\theta}}} < \rho^{2-\frac{2n+4}{\theta}}=\frac{1}{\rho^{\frac{2n+4}{\theta}-2}}$.
Now $\frac{2n+4}{\theta}-2  > 8$ since $n \geq 3$ and therefore $\rho^{\frac{2n+4}{\theta}-2} \geq \rho^{8} $ which in turn implies that
$$h^{2} \frac{\vert \xi \vert^{2}}{4} < \frac{1}{\rho^{8}} <1 .$$

 Then we have 
\begin{align*}
\rho^{\frac{n}{\theta}} e^{2n \rho \frac{1-\theta}{\theta}+ \frac{18 R}{h}}&= \rho^{\frac{n}{\theta}} e^{[2n \rho \frac{1-\theta}{\theta}+ 18 R (\rho^{\frac{n+2}{\theta}} e^{2n \rho \frac{1-\theta}{\theta}}) ]}\\
&\leq e^{\frac{n}{\theta}\rho+2n \rho \frac{1-\theta}{\theta}+18 R \rho^{\frac{n+2}{\theta}} e^{2n \rho \frac{1-\theta}{\theta}}} \text{ since } \rho^{\frac{n}{\theta}}\leq e^{\frac{n}{\theta}\rho}\\
&\leq e^{e^{\lb\frac{n}{\theta} \rho+ 2n \rho \frac{1-\theta}{\theta}\rb}+e^{\lb18 R+\frac{n+2}{\theta} \rho+2n \rho \frac{1-\theta}{\theta}\rb}} \\
&\leq e^{e^{\lb\frac{n}{\theta} \rho+ 2n \rho \frac{1-\theta}{\theta}\rb}+e^{\lb 18 R \rho+\frac{n+2}{\theta} \rho+2n \rho \frac{1-\theta}{\theta}\rb}} \text{ since } \rho\geq 1\\
& \leq  C e^{e^{\lb\frac{n}{\theta}+2n \frac{1-\theta}{\theta}+18 R+\frac{n+2}{\theta}+2n \frac{1-\theta}{\theta}\rb\rho}} \text {since } e^{a}+e^{b}\leq 1+e^{a+b}.
\end{align*}

Therefore since $K= \frac{2n+2}{\theta}+4n \frac{1-\theta}{\theta}+18 R $ and $\rho=\frac{1}{K} \ln \vert \ln \Vert \wt{\mathcal{N}}_{q_{1}}-\wt{\mathcal{N}}_{q_{2}} \Vert \vert $, we obtain
 \[
\Vert q \Vert^{\frac{2}{\theta}}_{H^{-1}(\Omega)} \leq C (\Vert \wt{\mathcal{N}}_{q_{1}}-\wt{\mathcal{N}}_{q_{2}} \Vert +\ (\frac{1}{K} \ln \vert \ln \Vert \wt{\mathcal{N}}_{q_{1}}-\wt{\mathcal{N}}_{q_{2}} \Vert \vert)^{-\frac{2}{\theta}})
\]
and hence 
\[ \Vert q \Vert_{H^{-1}(\Omega)} \leq C (\Vert \wt{\mathcal{N}}_{q_{1}}-\wt{\mathcal{N}}_{q_{2}} \Vert  +\ (\frac{1}{K} \ln \vert \ln \Vert \wt{\mathcal{N}}_{q_{1}}-\wt{\mathcal{N}}_{q_{2}} \Vert \vert)^{-\frac{2}{\theta}})^{\frac{\theta}{2}},
 \]
whenever $\Vert \wt{\mathcal{N}}_{q_{1}}-\wt{\mathcal{N}}_{q_{2}} \Vert < \delta $. \\
When $\Vert \wt{\mathcal{N}}_{q_{1}}-\wt{\mathcal{N}}_{q_{2}} \Vert \geq \delta$,
we have $$\Vert q_{1}-q_{2} \Vert_{H^{-1}(\Omega)} \leq C \Vert q_{1}-q_{2} \Vert_{L^{\infty}(\Omega)} \leq \frac{2CM}{\delta^{\frac{\theta}{2}}} \delta^{\frac{\theta}{2}} \leq \frac{2CM}{\delta^{\frac{\theta}{2}}} \Vert \wt{\mathcal{N}}_{q_{1}}-\wt{\mathcal{N}}_{q_{2}} \Vert^{\frac{\theta}{2}} $$ and the desired estimate follows.
\end{proof}
\appendix
\section{}\label{PartialData-Identifiability}
In this section we prove the unique determination of $q$ from \eqref{Biharmonic-NavierBVP} when the Neumann data $\wt{N}_{q}$ is known on slightly more than half the boundary. This is already done in a more general set-up with limiting Carleman weights in \cite{KLU-BiharmonicPaper}, where the authors use logarithmic weights.  We give here the proof with linear Carleman weight following \cite{Bukhgeim-Uhlmann-InverseProblemPaper} for the sake of completeness.

\begin{theorem}\cite{KLU-BiharmonicPaper} Let $\O\subset \Rb^{n}, n\geq 3$ be a bounded domain with smooth boundary. Consider Equation \eqref{Biharmonic-NavierBVP} for two potentials $q_{1}, q_{2} \in L^{\infty}(\O)$.
Let $\wt{\Nc}_{q_{1}}$ and $\wt{\Nc}_{q_{2}}$ be the corresponding Dirichlet-to-Neumann maps measured on $\PD \O_{-,\ve}$. If $\wt{N}_{q_{1}}=\wt{N}_{q_{2}}$, then $q_{1}=q_{2}$.
\end{theorem}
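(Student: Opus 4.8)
The plan is to adapt the Bukhgeim--Uhlmann argument \cite{Bukhgeim-Uhlmann-InverseProblemPaper} to the biharmonic operator, working with the linear Carleman weight $\vp(x)=x\cdot\A'$ and the CGO solutions of Proposition \ref{CGO solutions}, and to finish by an analyticity argument. The goal is to show that $\wh{(q_1-q_2)}(\xi)=0$ on a nonempty open set; since $q_1-q_2\in L^{\infty}(\O)$ has compact support, its Fourier transform is entire, so this forces $q_1\equiv q_2$.

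I would first reproduce the integral identity from the start of the proof of Theorem \ref{Full-Main Result}, but keeping track of the direction. Fix a unit vector $\A'$ with $|\A'-\A|<\ve$, a unit vector $\B$, and a vector $\xi$ with $\A',\B,\xi$ mutually orthogonal and $h^2|\xi|^2/4<1$; take CGO solutions $u_2$ to $\Bc_{q_2}u_2=0$ and $v$ to $\Bc_{q_1}^{*}v=0$ as in Proposition \ref{CGO solutions}, with $\zeta_2=-\tfrac{h\xi}{2}+\sqrt{1-h^2|\xi|^2/4}\,\B-\I\A'$ and $\zeta_1=\tfrac{h\xi}{2}+\sqrt{1-h^2|\xi|^2/4}\,\B+\I\A'$, so that $\zeta_2-\overline{\zeta_1}=-h\xi$. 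Let $u_1\in H^4(\O)$ solve \eqref{Biharmonic-NavierBVP} with potential $q_1$ and Navier data $(u_2|_{\PD\O},\triangle u_2|_{\PD\O})$, and set $u=u_1-u_2$; then $u|_{\PD\O}=\triangle u|_{\PD\O}=0$ and $\Bc_{q_1}u=-(q_1-q_2)u_2$. Substituting into \eqref{Biharmonic Greens Identity}, the last two boundary terms vanish because $u=\triangle u=0$ on $\PD\O$, and since $\wt{N}_{q_1}=\wt{N}_{q_2}$ forces $\PD_{\nu}u=\PD_{\nu}(\triangle u)=0$ on $\PD\O_{-,\ve}$, one is left with
\[
\wh{(q_2-q_1)}(\xi)+I=\int_{\PD\O_{+,\ve}}\PD_{\nu}(\triangle u)\,\overline v\,\D S+\int_{\PD\O_{+,\ve}}\PD_{\nu}u\,\overline{\triangle v}\,\D S ,
\]
where $|I|\le Ch$ exactly as in \eqref{Full data estimate for I}.

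The core step is to show the right-hand side is $O(\sqrt h)$, which is where the boundary Carleman estimate of Proposition \ref{Boundary Carleman}, applied to $u$ with weight $x\cdot\A'$, enters. Since $|\A'-\A|<\ve$ we have $\PD\O_{-}(\A')\subset\PD\O_{-,\ve}$ and $\PD\O_{+,\ve}\subset\PD\O_{+}(\A')$; hence both boundary terms on the left of Proposition \ref{Boundary Carleman} vanish, leaving only $\|e^{-\vp/h}h^4\Bc_{q_1}u\|_{L^2(\O)}=h^4\|e^{-\vp/h}(q_1-q_2)u_2\|_{L^2(\O)}\le Ch^4$ (the bound holds because $|e^{-\vp/h}u_2|=|1+hr_2|$ is bounded uniformly in $h$). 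Consequently
\[
h^{7/2}\|\sqrt{\A'\cdot\nu}\,e^{-\vp/h}\PD_{\nu}(\triangle u)\|_{L^2(\PD\O_{+}(\A'))}+h^{5/2}\|\sqrt{\A'\cdot\nu}\,e^{-\vp/h}\PD_{\nu}u\|_{L^2(\PD\O_{+}(\A'))}\le Ch^4 ,
\]
and using $\A'\cdot\nu>\ve$ on $\PD\O_{+,\ve}$ this yields $\|e^{-\vp/h}\PD_{\nu}(\triangle u)\|_{L^2(\PD\O_{+,\ve})}\le C\ve^{-1/2}h^{1/2}$ and $\|e^{-\vp/h}\PD_{\nu}u\|_{L^2(\PD\O_{+,\ve})}\le C\ve^{-1/2}h^{3/2}$. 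Writing $\PD_{\nu}(\triangle u)\,\overline v=(e^{-\vp/h}\PD_{\nu}(\triangle u))(e^{\vp/h}\overline v)$ and $\PD_{\nu}u\,\overline{\triangle v}=(e^{-\vp/h}\PD_{\nu}u)(e^{\vp/h}\overline{\triangle v})$, and noting that $\|e^{\vp/h}v\|_{L^2(\PD\O)}$ and $\|e^{\vp/h}\triangle v\|_{L^2(\PD\O)}$ are bounded uniformly in $h$ (from $|e^{\vp/h}v|=|1+hr_1|$, the explicit formula for $\triangle v$, the trace theorem, and $\|r_1\|_{H^4_{\mathrm{scl}}(\O)}\le Ch^2$), Cauchy--Schwarz shows both surviving boundary integrals are $O(\sqrt h)$. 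Hence $|\wh{(q_2-q_1)}(\xi)|\le C(h+\sqrt h)$, and letting $h\to0$ gives $\wh{(q_1-q_2)}(\xi)=0$ for this $\xi$.

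Finally, every $\xi$ orthogonal to some unit vector $\A'$ with $|\A'-\A|<\ve$ is reached in this way (choose $\B$ orthogonal to $\A'$ and $\xi$, which is possible since $n\ge3$), so $\wh{(q_1-q_2)}$ vanishes on the set $\bigcup_{|\A'-\A|<\ve}(\A')^{\perp}$, which contains a nonempty open subset of $\Rb^n$. Since the Fourier transform of the compactly supported function $q_1-q_2$ is entire, it vanishes identically and $q_1=q_2$. I expect the main obstacle to be the control of the boundary integral over the ``inaccessible'' piece $\PD\O_{+,\ve}$, and in particular the bookkeeping of the tilted weight: one must take $|\A'-\A|<\ve$ so that the portion of $\PD\O$ on which the boundary Carleman estimate retains boundary contributions, $\PD\O_{-}(\A')$, lies inside $\PD\O_{-,\ve}$ where the two Neumann data are known to agree; this use of the $\ve$-slack is precisely what makes ``slightly more than half'' the boundary sufficient.
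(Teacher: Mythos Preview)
Your proposal is correct and follows essentially the same route as the paper's proof in the appendix: the integral identity from \eqref{Biharmonic Greens Identity}, CGO solutions from Proposition \ref{CGO solutions}, control of the inaccessible boundary piece via the boundary Carleman estimate of Proposition \ref{Boundary Carleman}, uniform boundedness of $e^{\vp/h}\overline v$ and $e^{\vp/h}\overline{\triangle v}$ on $\PD\O$, and the Paley--Wiener conclusion after varying the direction. Your version is in fact slightly cleaner in that you introduce the perturbed direction $\A'$ from the outset and record the inclusions $\PD\O_{-}(\A')\subset\PD\O_{-,\ve}$ and $\PD\O_{+,\ve}\subset\PD\O_{+}(\A')$ explicitly, whereas the paper works with the fixed $\A$ and only invokes the perturbation at the very end. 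One small slip: on $\PD\O_{+,\ve}$ you only get $\A'\cdot\nu>\ve-|\A'-\A|>0$, not $\A'\cdot\nu>\ve$; this is harmless (any positive lower bound suffices), and is typically fixed by restricting to $|\A'-\A|<\ve/2$.
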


\bpr As before, we start with the following integral identity.
$$\int_{\Omega} (q_{2}-q_{1})u_{2} \bar v \ dx  =-\int_{\partial \Omega_{+,\epsilon}} \partial_{\nu}(-\Delta(u_{1}-u_{2}))\bar v \ \D S -\int_{\partial \Omega_{+,\epsilon}} \partial_{\nu}(u_{1}-u_{2}) \overline{(-\Delta v)}\ \D S $$ 
\begin{equation}
\begin{aligned}
\vert \int_{\partial \Omega_{+,\epsilon}} \partial_{\nu}(\Delta(u_{1}-u_{2}))\bar v \ \D S \vert &= \vert \int_{\partial \Omega_{+,\epsilon}} e^{-\frac{x\cdot \A}{h}} \partial_{\nu}(\Delta(u_{1}-u_{2})) e^{\frac{x\cdot \A}{h}} \bar v \ \D S \vert \\
&\leq \Vert e^{-\frac{x\cdot \A}{h}} \partial_{\nu} (\Delta(u_{1}-u_{2})) \Vert_{L^2(\partial \Omega_{+,\epsilon})} \Vert e^{\frac{x\cdot \A}{h}} \bar v \Vert_{L^2(\partial \Omega_{+,\epsilon})}
\end{aligned}
\notag
\end{equation}
\begin{equation}
\begin{aligned}
\vert \int_{\partial \Omega_{+,\epsilon}} \partial_{\nu} (u_{1}-u_{2}) \overline{(\Delta v)} \ \D S \vert &= \vert \int_{\partial \Omega_{+,\epsilon}} e^{-\frac{x\cdot \A}{h}} \partial_{\nu} (u_{1}-u_{2}) e^{\frac{x\cdot \A}{h}} \overline{(\Delta v)} \ \D S \vert \\
& \leq \Vert e^{-\frac{x\cdot \A}{h}} \partial_{\nu} (u_{1}-u_{2}) \Vert_{L^2(\partial \Omega_{+,\epsilon})} \Vert e^{\frac{x\cdot \A}{h}} \overline{(\Delta v)} \Vert_{L^2(\partial \Omega_{+,\epsilon})} 
\end{aligned}
\notag
\end{equation}
From the boundary Carleman estimate, we have 
$$\Vert \sqrt{\A\cdot \nu} \ e^{-\frac{x\cdot \A}{h}}\ \partial_{\nu}(-h^2 \Delta u) \Vert_{L^2(\partial \Omega_{+})} \leq \frac{C}{h^{\frac{3}{2}}} \Vert e^{-\frac{x\cdot \A}{h}} (h^4 \mathcal{B}_{q_{1}})u \Vert_{L^2(\Omega)} $$
where $u=u_{1}-u_{2}$.\\
Using this we get
\begin{equation}
\begin{aligned}
\sqrt{\epsilon}  \Vert e^{-\frac{x\cdot \A}{h}} \partial_{\nu}(-h^2 \Delta u) \Vert_{L^2(\partial \Omega_{+,\epsilon})} &\leq \Vert \sqrt{\A\cdot \nu}\ e^{-\frac{x\cdot \A}{h}} \partial_{\nu}(-h^2 \Delta u) \Vert_{L^2(\partial \Omega_{+,\epsilon})} \\
& \leq \Vert \sqrt{\A\cdot \nu}\ e^{-\frac{x\cdot \A}{h}} \partial_{\nu}(-h^2 \Delta u) \Vert_{L^2(\partial \Omega{+})} \\
& \leq \frac{C}{h^\frac{3}{2}} \Vert e^{-\frac{x\cdot \A}{h}} (h^4 \mathcal{B}_{q_{1}})u \Vert_{L^2(\Omega)}.
\end{aligned}
\notag
\end{equation}
Therefore 
\begin{equation}
\begin{aligned}
\Vert  e^{-\frac{x\cdot \A}{h}} \partial_{\nu}(\Delta (u_{1}-u_{2})) \Vert_{L^2(\partial \Omega_{+,\epsilon})} & \leq C \sqrt{\frac{h}{\epsilon}} \Vert e^{-\frac{x\cdot \A}{h}}(q_{2}-q_{1})e^{\frac{\I x.\zeta_{2}}{h}}(1+hr_{2}) \Vert_{L^2(\Omega)} \\
& \leq C \sqrt{\frac{h}{\epsilon}} \Vert (1+h r_{2}) \Vert_{L^2(\Omega)}  \leq C \sqrt{\frac{h}{\epsilon}}.
\end{aligned}
\notag
\end{equation}
From the boundary Carleman estimate, we also have
$$\Vert \sqrt{\A\cdot \nu}\ e^{-\frac{x\cdot \A}{h}} \partial_{\nu}u \Vert_{L^2(\partial \Omega_{+})} \leq \frac{C}{h^\frac{5}{2}} \Vert    e^{-\frac{x\cdot \A}{h}} (h^4 \mathcal{B}_{q_{1}})u \Vert_{L^2(\Omega)} $$
Again using this, we get
\begin{equation}
\begin{aligned}
\sqrt{\epsilon} \Vert e^{-\frac{x\cdot \A}{h}} \partial_{\nu}u \Vert_{L^2(\partial \Omega_{+,\epsilon})} & \leq \Vert \sqrt{\A\cdot \nu} \ e^{-\frac{x\cdot \A}{h}} \partial_{\nu}u \Vert_{L^2(\partial \Omega_{+,\epsilon})} \\
& \leq \Vert \sqrt{\A\cdot \nu} \ e^{-\frac{x\cdot \A}{h}} \partial_{\nu}u \Vert_{L^2(\partial \Omega_{+})} \\
& \leq \frac{C}{h^\frac{5}{2}} \Vert e^{-\frac{x\cdot \A}{h}} (h^4 \mathcal{B}_{q_{1}})u  \Vert_{L^2(\Omega)}
\end{aligned}
\notag
\end{equation}
\begin{equation}
\begin{aligned}
\Rightarrow \Vert e^{-\frac{x\cdot \A}{h}} \partial_{\nu}(u_{1}-u_{2})\Vert_{L^2(\partial \Omega_{+,\epsilon})} & \leq \frac{C}{h^\frac{5}{2} \sqrt{\epsilon}} \Vert  e^{-\frac{x\cdot \A}{h}} (h^4 \mathcal{B}_{q_{1}})u \Vert_{L^2(\Omega)} \\
& \leq \frac{C h^\frac{3}{2}}{\sqrt{\epsilon}} \Vert (1+hr_{2}) \Vert_{L^2(\Omega)}   \leq \frac{C h^\frac{3}{2}}{\sqrt{\epsilon}}
\end{aligned}
\notag
\end{equation}
Next we show that the terms $\Vert e^{\frac{x\cdot \A}{h}} \bar v \Vert_{L^2(\partial \Omega_{+,\epsilon})}$ and $\Vert e^{\frac{x\cdot \A}{h}} \overline{(\Delta v)} \Vert_{L^2(\partial \Omega_{+,\epsilon})}$ are bounded.
The term
\begin{equation}
\begin{aligned}
\Vert e^{\frac{x\cdot \A}{h}} \bar v \Vert_{L^2(\PD\Omega_{+,\epsilon})} & \leq \Vert e^{\frac{x\cdot \A}{h}} \bar v \Vert_{L^2(\partial \Omega)} = \Vert (1+h \overline{r_{1}})\Vert_{L^2(\partial \Omega)} \\
& \leq C(1+ \Vert h r_{1} \Vert_{H^1(\Omega)}) \leq C,
\end{aligned}
\notag
\end{equation}
since $h \ll 1 $.\\
Again
\[
e^{\frac{x\cdot \A}{h}} \overline{(\Delta v)}= e^{\frac{\I}{h}(-\frac{h}{2}(x.\xi)-\sqrt{1-h^2 \frac{\vert \xi \vert^2}{4}}(x\cdot\beta))} h \Delta \overline{r_{1}}-2\I e^{\frac{\I}{h}(-\frac{h}{2}(x\cdot\xi)-\sqrt{1-h^2 \frac{\vert \xi \vert^2}{4}}(x\cdot\beta))} \overline{\zeta_{1}}\cdot\nabla \overline{r_{1}}. 
\] 
Therefore, we have
\begin{equation}
\begin{aligned}
\Vert e^{\frac{x\cdot \A}{h}} \overline{(\Delta v)} \Vert_{L^2(\partial \Omega_{+,\epsilon})} & \leq \Vert  e^{\frac{x\cdot \A}{h}} \overline{(\Delta v)} \Vert_{L^2(\Omega)} \\
& \leq \Vert h \Delta r_{1} \Vert_{L^2(\partial \Omega)} + C \Vert \nabla r_{1} \Vert_{L^2(\partial \Omega)} \\
& \leq C(h \Vert \Delta r_{1} \Vert_{H^1(\Omega)} + \Vert  \nabla r_{1} \Vert_{H^1(\Omega)}) \\
& \leq C(\frac{h}{h^2} \Vert r_{1} \Vert_{H^4_{\mathrm{scl}}(\Omega)} + \frac{1}{h} \Vert r_{1} \Vert_{H^4_{\mathrm{scl}}(\Omega)}) \leq C.
\end{aligned}
\notag
\end{equation}
%
Also using the estimates on $r_{1}, r_{2} $ it follows that as limit $h \rightarrow 0 $, 
\begin{equation}
\int_{\Omega} (q_{2}-q_{1})u_{2} \bar{v} \ dx \rightarrow \int_{\Omega} e^{-\I x\cdot\xi} (q_{2}-q_{1})\: \D x.
  \notag
\end{equation}
Therefore combining all the above estimates and passing to the limit as $h \rightarrow 0 $, we have
\begin{equation}
\int_{\Omega} e^{-\I x\cdot\xi}(q_{2}-q_{1}) \: \D x =0
 \notag
\end{equation}
for all $\xi \in \mathbb{R}^{n} $ perpendicular to $\alpha$. Varying $\alpha $ in a sufficiently small neighborhood, we see that above estimates
is true for all $\xi $ in an open cone in $\mathbb{R}^{n} $. A simple application of the Paley-Wiener theorem then implies that $q_{2}=q_{1}$ on $\O$. This concludes the proof.
\epr

%













\end{document}